\DeclareMathOperator{\argmax}{argmax}
\DeclareMathOperator{\cheb}{\bf cheb}
\DeclareMathOperator{\rand}{\bf rand}
\DeclareMathOperator{\rt}{root}
\DeclareMathOperator{\sons}{sons}
\DeclareMathOperator{\spn}{span}
\begin{document}
\title{Finding entries of maximum absolute value in low-rank tensors}
\author{
  Lars Grasedyck\thanks{Institut f\"ur Geometrie und Praktische Mathematik, 
    RWTH Aachen, Templergraben 55, 52056 Aachen, Germany. 
    Email: {\tt \{lgr,loebbert\}@igpm.rwth-aachen.de}. The authors gratefully acknowledge support by the
    DFG priority programme 1648 (SPPEXA) under grant GR-3179/4-2 and 
    DFG priority programme 1886 under grant GR-3179/5-1.
  }
  \and
  Lukas Juschka\thanks{Email: {\tt lukas.juschka@rwth-aachen.de}}
  \and
  Christian L\"obbert$^*$
}
\date{}

\maketitle
\sloppy
\begin{abstract}
We present an iterative method for the search of extreme entries in low-rank tensors which is based
on a power iteration combined with a binary search.
In this work we use the HT-format for low-rank tensors but other low-rank formats 
can be used verbatim.
We present two different approaches to accelerate the basic power iteration:
an orthogonal projection of Rayleigh-Ritz type, as well as an acceleration of the
power iteration itself which can be achieved due to the diagonal structure of the underlying
eigenvalue problem.
Finally the maximizing index is determined by a binary search based on the proposed iterative
method for the approximation of the maximum norm.
The iterative method for the maximum norm estimation inherits the linear complexity 
of the involved tensor arithmetic in the HT-format w.r.t. the tensor order, 
which is also verified by numerical tests.
\end{abstract}

\section{Introduction}\label{section:introduction}

Scientific computing with functions, vectors and matrices in low rank formats allows
us to tackle even high-dimensional and extreme scale problems, provided the underlying
mathematical objects allow for a low rank approximation. In this article we focus on the 
task of postprocessing low rank matrices and tensors. Our guiding problem is to find the 
entry of maximum absolute value in a low rank object without accessing every entry. Such 
a task is the discrete analogue to finding global optima and thus of general interest. 
While this is straight-forward for rank one, it is surprisingly difficult for ranks beyond one. 
The seminal idea for finding the maximum absolute value efficiently stems from the PhD thesis 
of Mike Espig \cite{Espig08} where the problem is reformulated as a diagonal (extreme scale)
eigenvalue problem. The latter can e.g. be solved by a simple power iteration, and in case that 
there is a unique maximizer, the corresponding eigenvector is rank one. 
Hence, it makes sense to approximate the eigenvector by low rank 
(cf. \cite{Litvinenko19} for a recent approach independently of this article).
However, if several 
entries are of similar size, then the power iteration can be arbitrarily slow, the
iterates cannot be approximated by low rank and a truncation to low rank may destroy
convergence altogether. 

In this article we provide much more advanced approaches for finding the maximum by subspace 
acceleration and a squaring trick. Without truncation we are able 
to prove a uniformly good convergence rate of $1/2$ for computing the $\|\cdot\|_{\infty}$-norm
and based on this we devise a divide and conquer strategy to find the corresponding entry.

The article is structured as follows: In Section~\ref{section:tensorformats} we introduce the HT-format followed by the 
problem for rank one or elementary tensors in Section~\ref{section:rank1}. In Section~\ref{section:power} we summarize the power 
iteration approach and give examples that underline the involved difficulties. 
In Section~\ref{section:rayritz} we 
provide the subspace acceleration and in Section~\ref{section:squaring} the squaring trick. Finally, in Section~\ref{section:finding} we 
explain how one can determine the entry of maximum absolute value based on the so far introduced 
iterative methods. We conclude by giving numerical examples in 
Section~\ref{section:experiments} that show the benefits and limitations of the 
algorithms.

\section{Low-rank formats for tensors}\label{section:tensorformats}
We denote the entry of an order $d$ tensor $\mathbf{a} \in \mathbb R^I, I = \bigtimes_{\mu = 1}^d I_\mu$, by  
\begin{equation*}
\mathbf a[\mathbf i] =\mathbf a[i_1,\ldots ,i_d] \;,\qquad \mathbf i = (i_1,\ldots ,i_d)\in I.
\end{equation*}
For larger tensor order $d \gg 2$ a full (entrywise) representation is not feasible
(cf. the curse of dimensionality \cite{Bellman03,Bellman61}) and additional structure is required.
In the following we use low rank tensors that generalize the matrix rank.
\begin{definition}[Elementary tensors]\label{definition:elementaryTensors}
A tensor $\mathbf a\in\mathbb{R}^I$ is called elementary tensor or rank $1$ tensor if
\begin{equation}\label{equation:elementaryTensor}
\mathbf a = \bigotimes_{\mu = 1}^d u^{(\mu)} = u^{(1)}\otimes\cdots\otimes u^{(d)},
\qquad
u^{(\mu)}\in\mathbb{R}^{I_\mu},\quad \mu = 1,\ldots , d.
\end{equation}
\end{definition}
The CP-rank (Canonical Polyadic) \cite{Hitchcock27} of a tensor $\mathbf a\in\mathbb{R}^I$ is defined
as the minimum number $\mathrm{rank}(a)=r\in\mathbb{N}_0$ such that 
\begin{equation}\label{equation:cpDecomposition}
\mathbf a = \sum_{k=1}^r U^{(1)}[\bullet,k]\otimes \cdots \otimes U^{(d)}[\bullet,k],
\qquad
U^{(\mu)}\in\mathbb{R}^{I_\mu\times r},\quad \mu = 1,\ldots ,d.
\end{equation}
For computational purposes the minimum number $r$ of terms is not necessary, instead any representation
of the form \eqref{equation:cpDecomposition} with reasonably small $r$ is sufficient. Such a representation is
called CP-format with representation rank $r$. The corresponding set is defined by
\begin{equation}\label{equation:setRr}
\mathcal{R}_r := \{\mathbf a\in\mathbb{R}^I\; :\; \rank(\mathbf a)\leq r\}. 
\end{equation}
These sets are numerically difficult and thus a different type of rank is commonly used
which is based on matricizations.
\begin{definition}[$t$-matricization]\label{definition:matricization}
Let $\mathbf a\in\mathbb R^I$. We denote 
\begin{equation}\label{equation:notationSubIndexSets}
I_t :=\bigtimes_{\mu\in t} I_\mu\qquad\mbox{and}\qquad 
I_{[t]} :=\bigtimes_{\mu\notin t} I_\mu,
\qquad t\subseteq \{1,\ldots ,d\}
\end{equation}
and we use the short notation 
\begin{equation}\label{equation:shortNotationIndices}
\mathbf i_t = (i_\mu)_{\mu\in t}, i_t\in I_t\qquad 
\mathbf i_{[t]} = (i_\mu)_{\mu\notin t}, i_{[t]}\in I_{[t]}. 
\end{equation}
Then the $t$-matricization $\mathcal{M}_t(\mathbf a)\in\mathbb{R}^{I_t\times I_{[t]}}$ of 
$\mathbf a$ is defined by
\begin{equation*}
\mathcal{M}_t(\mathbf a)[ \mathbf i_t,\, \mathbf i_{[t]}] :=
\mathbf a[\mathbf i],\qquad
\mathbf i = (i_1,\ldots ,i_d)\in I,
\end{equation*}
i.e. a rearrangement of $\mathbf a$ into a matrix with row index set $I_t$ and column index set
$I_{[t]}$.
\end{definition}
For the CP-format, instead of all tensor entries $\mathbf a[i_1,\ldots ,i_d]$ of some tensor
$\mathbf a\in\mathbb{R}^I$, the matrices $U^{(\mu)}\in\mathbb{R}^{I_\mu\times r}$ from the
right-hand side of \eqref{equation:cpDecomposition} are stored.
Assuming all tensor directions to be of the same size $n = \#I_\mu$, this sums up to a storage
complexity of $dnr$, which is linear in the tensor order $d$.
Despite of this desirable linear complexity in $d$, the CP-format has two drawbacks:
on the one hand, the CP-rank is in general NP-hard to determine, cf. \cite{Hastad90}, 
on the other hand, the set \eqref{equation:setRr} is not closed for tensors of higher order
$d\geq 3$, cf. \cite{deSilva08,Ha12}. 
This fact can render the numerical treatment of tensors in CP-format difficult.

For the Tucker format the corresponding set 
is closed, cf. \cite{Ha12}, but the storage complexity w.r.t. the tensor order $d$ is exponential.

The above mentioned complexities and shortcomings transfer to many arithmetic operations, such as 
dot products or Hadamard products, which are involved in the algorithms presented in this article.
All of the shortcomings can be resolved by using the Hierarchical Tucker format (HT-format) introduced
in the following. 

\subsection{The Hierarchical Tucker format}\label{subsection:htFormat}

An HT-format for a tensor $\mathbf a\in\mathbb{R}^I$, $I = \bigtimes_{\mu = 1}^d I_\mu$,
is based on a hierarchy of all tensor directions $D:= \{1,\ldots ,d\}$, which can be 
expressed by a binary tree, referred to as dimension tree, cf. \cite{Gr10,Ha12}:

\begin{definition}[Dimension tree]
For a tensor space $\mathbb R^I$, $I = \bigtimes_{\mu = 1}^d I_\mu$, by $D = \{1,\ldots ,d\}$
we denote the set of all tensor directions. A corresponding dimension tree $T_D$ is a 
tree with the following properties:
\begin{enumerate}[(a)]
\item all vertices of $T_D$ are non-empty subsets of $D$,
\item the root vertex of $T_D$ is the set $D$: $\rt(T_D) = D$,
\item all vertices $t\in T_D$ with $\# t > 1$ have two disjoint successors $\sons(t) = \{s_1,s_2\}$ 
with
\begin{equation*}
t = s_1 \dot\cup s_2\qquad\mbox{(disjoint union)}.
\end{equation*}
\end{enumerate}
\end{definition}
A dimension tree $T_D$ for a tensor order $d\in\mathbb{N}$, $D = \{1,\ldots ,d\}$, is thus a
binary tree with singletons $\{\mu\}$, $\mu = 1,\ldots ,d$, at the leaf vertices. 
Two examples of dimension trees for tensor order $d=4$ are shown in 
Figure~\ref{figure:dimensionTrees4}.
\begin{figure}
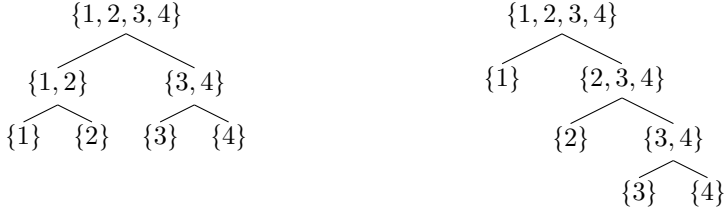

\begin{subfigure}[b]{0.48\textwidth}
\Tree
[.$\{1,2,3,4\}$
    [.$\{1,2\}$
        $\{1\}$
        $\{2\}$
    ].$\{1,2\}$
    [.$\{3,4\}$
        $\{3\}$
        $\{4\}$
    ].$\{3,4\}$
].$\{1,2,3,4\}$
\end{subfigure}
\begin{subfigure}[b]{0.48\textwidth}
\Tree
[.$\{1,2,3,4\}$
    $\{1\}$
    [.$\{2,3,4\}$
        $\{2\}$
        [.$\{3,4\}$
            $\{3\}$
            $\{4\}$
        ].$\{3,4\}$
    ].$\{2,3,4\}$
].$\{1,2,3,4\}$
\end{subfigure}
\caption{Two different dimension trees for tensor order $d=4$.}
\label{figure:dimensionTrees4}
\end{figure}

The HT-format is again defined by matricizations, cf. Definition~\ref{definition:matricization},
where the subsets $t\subseteq \{1,\ldots ,d\}$ are the subsets $t\in T_D$ of the underlying 
dimension tree $T_D$ which determines the structure of the HT-format:
\begin{definition}[Hierarchical Tucker format (HT-format)]\label{definition:htFormat}
Let $\mathbf a\in\mathbb{R}^I$, $I = \bigtimes_{\mu = 1}^d I_\mu$, be a tensor and
$T_D$, $D = \{1,\ldots ,d\}$, a dimension tree. Furthermore, for all $t\in T_D$ let
\begin{equation}\label{equation:tMatricizationDecomposition}
\mathcal{M}_t(\mathbf a) = U^{(t)}(V^{(t)})^\top,\qquad
U^{(t)}\in\mathbb{R}^{I_t\times r_t},\qquad
V^{(t)}\in\mathbb{R}^{I_{[t]}\times r_t},
\end{equation}
$I_t$, $I_{[t]}$ as in \eqref{equation:notationSubIndexSets},
be a decomposition of the $t$-matricization of $\mathbf a$, where the $U^{(t)}$, $t\in T_D$,
are referred to as frames: the columns of $U^{(t)}$ span the range of $\mathcal{M}_t(\mathbf a)$,
especially for $t =D$ we assume \eqref{equation:tMatricizationDecomposition} to be chosen such
that
\begin{equation}\label{equation:rootMatricization}
\mathbf a = \mathcal{M}_D(\mathbf a) = U^{(D)}
\end{equation}
holds, i.e. $V^{(D)} = 1\in\mathbb{R}^{1\times 1}$.
A Hierarchical Tucker format (HT-format) for $\mathbf a$ is then defined by
\begin{enumerate}[(a)]
\item the above matrices $U^{(\{\mu\})}\in\mathbb{R}^{I_\mu \times r_{\{\mu\}}}$ for all leaf vertices $\{\mu\}$, $\mu = 1,\ldots ,d$, of $T_D$ and \label{enumerate:htFormatLeaves}
\item transfer tensors $\mathbf b^{(t)}\in\mathbb{R}^{r_t\times r_{s_1}\times r_{s_2}}$ at all
\label{enumerate:htFormatNonLeaves}
non-leaf vertices $t\in T_D$, $\sons(t) = \{s_1,s_2\}$, with $r_D = 1$ at the root, which fulfill for all $k = 1,\ldots ,r_t,$
\begin{equation}\label{equation:htNestedness}
U^{(t)}[\bullet,k] = \sum_{k_1 =1}^{r_{s_1}}\sum_{k_2 =1}^{r_{s_2}} \mathbf b^{(t)}[k,k_1,k_2] 
U^{(s_1)}[\bullet,k_1]\otimes U^{(s_2)}[\bullet,k_2],
\end{equation}
i.e. the transfer tensor $\mathbf b^{(t)}$ holds the coefficients of the linear combination
\eqref{equation:htNestedness} by which the frame $U^{(t)}$ can be obtained from the frames 
$U^{(s_1)}$ and $U^{(s_2)}$. 
\end{enumerate}
The vector $\mathbf r = (r_t)_{t\in T_D}$ containing the numbers of columns of the frames
$U^{(t)}$, $t\in T_D$, is called representation rank of the HT-format. Notice that we always
assume $r_D = 1$ at the root, i.e. $U^{(D)}$ contains only one column, namely 
$\mathcal{M}_D(\mathbf a) = \mathbf a$, cf. \eqref{equation:rootMatricization}.
\end{definition}

\begin{definition}[Hierarchical Tucker rank (HT-rank)]
The Hierarchical Tucker rank (HT-rank) of a tensor $\mathbf a\in\mathbb{R}^I$, 
$I = \bigtimes_{\mu = 1}^d I_\mu$, w.r.t. some dimension tree $T_D$,
$D = \{1,\ldots ,d\}$, is defined as the vector $\mathbf r = (r_t)_{t\in T_D}$ containing the
smallest possible numbers $r_t$, $t\in T_D$, such that an HT-format of $\mathbf a$ with 
representation rank $\mathbf r$ exists. 
\end{definition}

Notice that by \eqref{equation:tMatricizationDecomposition} the components $r_t$, $t\in T_D$, of the 
HT-rank of $\mathbf a\in\mathbb{R}^I$, $I = \bigtimes_{\mu = 1}^d I_\mu$, w.r.t to some dimension
tree $T_D$ are the ranks of the respective matricizations
$\mathcal{M}_t(\mathbf a)\in\mathbb{R}^{I_t\times I_{[t]}}$, denoted as
\begin{equation*}
\rank_t(\mathbf a) := \rank(\mathcal{M}_t(\mathbf a)).
\end{equation*}
According to \eqref{equation:setRr} 
for $\mathbf r\in\mathbb{N}_0^{T_D}$,
$r_D = 1$, we define the set $\mathcal{H}_{\mathbf r}$ as
\begin{equation}\label{equation:setHr}
\mathcal{H}_{\mathbf r} := \{\mathbf a\in\mathbb{R}^I\; :\; \rank_t(\mathbf a)\leq r_t,
\; t\in T_D\},
\end{equation}
i.e. the set of all tensors for which an HT-representation with representation rank 
$\mathbf r$ exists. It can be shown that $\mathcal{H}_{\mathbf r}$ is a closed subset
of $\mathbb R^I$, cf. \cite{Ha12}.

\paragraph*{Evaluation of tensor entries}
Let $\mathbf a\in\mathbb{R}^I$, $I = \bigtimes_{\mu = 1}^d I_\mu$, be represented in the HT-format
w.r.t. some dimension tree $T_D$, i.e. at the leaf vertices $\{\mu\}\in T_D$ the matrices
$U^{(\{\mu\})}\in\mathbb{R}^{I_\mu\times r_{\{\mu\}}}$ are stored and at the non-leaf vertices
$t\in T_D$ the transfer tensors $\mathbf b^{(t)}\in\mathbb{R}^{r_t\times r_{s_1}\times r_{s_1}}$,
$\sons(t) = \{s_1,s_2\}$, are stored. The tensor entries
$\mathbf a[\mathbf i]$, $\mathbf i = (i_1,\ldots ,i_d)\in I$, are thus not available explicitly
but can be computed recursively: by \eqref{equation:rootMatricization} we have
\begin{equation*}
\mathbf a[\mathbf i] = U^{(D)}[\mathbf i],
\end{equation*}
which can be evaluated by using \eqref{equation:htNestedness}:
\begin{equation}\label{equation:htRecursionRoot}
U^{(D)}[\mathbf i]
\underset{\eqref{equation:htNestedness}}{=} \sum_{k_1 =1}^{r_{t_1}}\sum_{k_2 =1}^{r_{t_2}}
\mathbf b^{(D)}[k_1,k_2] U^{(t_1)}[\mathbf i_{t_1},k_1] U^{(t_2)}[\mathbf i_{t_2},k_2],
\quad \sons(D) = \{t_1, t_2\},
\end{equation}
where we use the notation \eqref{equation:shortNotationIndices} for sub-indices
of $\mathbf i\in I$ corresponding to some subset $t\subseteq\{1,\ldots ,d\}$.
The columns $U^{(t_1)}[\bullet,k_1]$ and $U^{(t_2)}[\bullet,k_2]$ 
are either stored explicitly (if the corresponding vertex
$t_1$ or $t_2$ is a leaf of $T_D$) or can again be evaluated recursively by using
\eqref{equation:htNestedness}.
By this means, the tensor entries $\mathbf a[\mathbf i]$, $\mathbf i\in I$, of $\mathbf a$
can be evaluated with a computational work growing like $\mathcal{O}(dr^3)$, i.e. linearly with
the tensor order $d$, when $r\in\mathbb{N}$ is
an upper bound for the components of the representation rank $\mathbf r = (r_t)_{t\in T_D}$.

\paragraph*{Storage complexity}
The storage complexity for a tensor $\mathbf a\in\mathbb{R}^I$, $I = \bigtimes_{\mu = 1}^d I_\mu$,
in HT-representation w.r.t. some dimension tree $T_D$ also grows linearly with the tensor order
$d$ (as for the CP-format): assuming all tensor directions to be of the same size
$n = \#I_\mu$ and a bound $r\in\mathbb{N}$ for the components of the representation rank
$\mathbf r = (r_t)_{t\in T_D}$ of $\mathbf a$, $nr$ real numbers have to be stored at each
leaf vertex $\{\mu\}\in T_D$, $\mu = 1,\ldots ,d$, cf. 
Definition~\ref{definition:htFormat} \eqref{enumerate:htFormatLeaves}, 
and $r^3$ real numbers have to be stored at each interior 
vertex (neither leaf nor root) and
$r^2$ real numbers at the root. Taking into account that a dimension tree $T_D$, 
$D = \{1,\ldots ,d\}$, consists of $d$ leaf vertices and $d-1$ non-leaf vertices, this sums up
to a storage complexity of $\mathcal{O}(d(nr + r^3))$.

\paragraph*{Arithmetic operations in the HT-format}
Two arithmetic operations for tensors, which are of particular importance for this article,
are the dot product $\langle \mathbf x,\mathbf y\rangle\in\mathbb R$ and the Hadamard product 
$\mathbf x\circ\mathbf y\in\mathbb R^I$ of two tensors $\mathbf x, \mathbf y\in\mathbb{R}^I$, 
defined by
\[
\langle \mathbf x,\mathbf y\rangle  := \sum_{\mathbf i\in I}\mathbf x[\mathbf i]\cdot \mathbf y[\mathbf i],
\qquad
(\mathbf x\circ \mathbf y)[\mathbf i] :=
\mathbf x[\mathbf i]\mathbf \cdot \mathbf y[\mathbf i],\quad \mathbf i\in I.
\]
These operations can be computed directly in the HT-format, cf. \cite{Ha12,GrLoe18}.
The corresponding complexities grow again linearly with the tensor order $d$:
assuming all tensor directions to be of the same size $n\in\mathbb{N}$ and an upper bound 
$r\in\mathbb{N}$ for the components of the representation ranks of $\mathbf x$ and $\mathbf y$, 
dot products can be computed with a complexity bounded by
$\mathcal{O}(d(nr^2 + r^4))$ flops 
whereas 
$\mathcal{O}(d(nr^2 + r^6))$ flops 
are needed for the computation of Hadamard products (cf. \cite{GrLoe18} for a parallel algorithm).

A more detailed description of different low-rank formats for tensors can e.g. be found in
\cite{Ha12,GrKreTob13}. For this article we use the HT-format but other formats can be used
as well, e.g. the TT-format \cite{Osel11,Ha12} or MPS format \cite{VerCir06,Vidal03}.
Since the computation of the Hadamard product of two low-rank tensors typically yields a
result of higher representation rank, one needs a
truncation procedure which truncates tensors back to lower ranks. For the HT-format we 
use the truncation techniques introduced in \cite{Gr10} the complexity of which is comparable to
that of the computation of dot products above.

\section{Maximum entries of elementary tensors}\label{section:rank1}
Let $\mathbf a\in\mathbb{R}^I$, $I = \bigtimes_{\mu = 1}^d I_\mu$.
\begin{definition}[Maximum norm of a tensor]\label{definition:maximumNorm}
The maximum norm of a tensor $\mathbf a$
is defined by 
$\|\mathbf a\|_\infty := \max\{ |\mathbf a[\mathbf i]|\; :\; \mathbf i\in I\}$.
\end{definition}

\paragraph*{Maximum norm of a matrix}
Notice that Definition~\ref{definition:maximumNorm} may lead to an ambiguity:
for a matrix $M\in\mathbb{R}^{m\times n}$, $\|M\|_\infty$ is often used to denote the operator norm 
$\|M\|_{\mathrm{op},\infty}$ of $M$ as a linear operator 
$(\mathbb R^n,\|\cdot\|_\infty)\to (\mathbb R^m,\|\cdot\|_\infty)$ which does not match 
Definition~\ref{definition:maximumNorm}:
\begin{equation*}
\|M\|_{\mathrm{op},\infty} := \sup_{\|x\|_\infty = 1 }\|Mx\|_\infty 
 = \max_{i=1,\ldots ,m} \|M[i,\bullet]\|_1,
\end{equation*}
cf. \cite{Golub96}, where $M[i,\bullet]$ denotes the $i$-th row of $M$.
In the following $\|M\|_\infty$ is always used according to Definition~\ref{definition:maximumNorm},
i.e. $\|M\|_\infty = \max\{|M[i,j]|\; :\; i=1,\ldots ,m,\; j=1,\ldots ,n\}$.

\paragraph*{Maximum norm of an elementary tensor}
Let 
$\mathbf a = u^{(1)}\otimes\cdots\otimes u^{(d)}$, $u^{(\mu)}\in\mathbb{R}^{I_\mu}$,
$\mu = 1,\ldots ,d$, be an elementary tensor.
The maximum norm  of $\mathbf a$ is then given by
\begin{equation*}
\|\mathbf{a}\|_{\infty} = \max_{\mathbf{i}\in I}|a[\mathbf i]| = 
\prod_{\mu = 1}^d |u^{(\mu)}[\bullet]|_{\infty},
\end{equation*}
which can be maximized by maximizing each of the factors $|u^{(\mu)}[\bullet]|$, 
$\mu = 1,\ldots ,d$, in complexity ${\cal O}(dn)$.
Unfortunately, no such algorithm is known for tensors of higher rank (i.e. non-elementary tensors).
Even if a good approximation of a matrix $M$ by a rank-$1$ matrix $\tilde M$ exists in the 
Euclidean 
sense (e.g. a truncated singular value decomposition), the corresponding
maximum norms $\|M\|_\infty$ and $\|\tilde M\|_\infty$ may differ extremely as the following
example indicates:

\paragraph*{Example}
Consider a matrix $M\in\mathbb{R}^{n\times n}$ defined as
\begin{equation}\label{equation:counterexample}
M := \underbrace{\frac{\sigma_1}{n-1}
\left(
\begin{array}{r|rrr}
0 & & & \\ \hline
 & 1 & \cdots & 1 \\
 & \vdots & \ddots & \vdots \\
 & 1 & \cdots & 1
\end{array}
\right)}_{M_1} + 
\underbrace{\sigma_2 
\left(
\begin{array}{r|rrr}
1 & & & \\ \hline
 & 0 & \cdots & 0 \\
 & \vdots & \ddots & \vdots \\
 & 0 & \cdots & 0
\end{array}
\right)}_{M_2},\qquad \sigma_1 > \sigma_2 \geq 0.
\end{equation}
Then $M$ has the singular values $\sigma_1 > \sigma_2$ and $M_1$ is the rank-$1$ best approximation
of $M$ in the Frobenius (and spectral) norm
with an approximation error 
\begin{equation}\label{equation:approximationErrorFrobenius}
\frac{\| M - M_1\|_F}{\|M\|_F} = \sqrt{\frac{\sigma_2^2}{\sigma_1^2 + \sigma_2^2}} \leq
\frac{\sigma_2}{\sigma_1}
\end{equation}
in the Frobenius norm.
Assuming $\sigma_2 > \frac{\sigma_1}{n-1}$ we get a corresponding error
\begin{equation}\label{equation:approximationErrorAbsMax}
\frac{|\|M\|_\infty - \|M_1\|_\infty|}{\|M\|_\infty}
 = \frac{\sigma_2 - \frac{\sigma_1}{n-1}}{\sigma_2} = 1 - 
 \frac{\sigma_1}{\sigma_2}\cdot\frac{1}{n-1}
\end{equation}
for the approximation of the maximum norm, which tends to $1$ for $n\to\infty$, 
independently of the error \eqref{equation:approximationErrorFrobenius}.

\section{Estimating the maximum norm by a power iteration}\label{section:power}

We use a power iteration to approximate the maximum norm $\|\mathbf a\|_\infty$ of
a tensor $\mathbf a\in\mathbb{R}^I$, $I = \bigtimes_{\mu = 1}^d I_\mu$, as introduced in 
\cite{Espig08}:
consider the diagonal matrix 
\begin{equation}\label{equation:tensorAsDiagMatrix}
D\in\mathbb R^{I\times I},\qquad \diag(D) = \mathbf a.
\end{equation}
Then the maximum norm $\|\mathbf a\|_\infty$ is
the maximum absolute value of an eigenvalue of $D$:
\begin{equation}\label{equation:maximumNormAsEigenvalue}
\|\mathbf a\|_\infty = \max\{ |\lambda|\; :\; D\mathbf v = \lambda \mathbf v\;\mbox{for some}\;\mathbf v\neq \mathbf 0\},
\end{equation}
which can be computed by a standard power iteration if there exists only one eigenvalue 
$\lambda$ of $D$
with $|\lambda| = \|\mathbf a\|_\infty$, i.e. there is only one index $\mathbf i\in I$ with
$\|\mathbf a\|_\infty = |\mathbf a[\mathbf i]|$. Taking into account that the matrix multiplication
$D\mathbf v$, $\mathbf v\in\mathbb{R}^I$, can be written as the Hadamard product
$\mathbf a\circ\mathbf v$, this results in Algorithm~\ref{algorithm:powerIteration}, where 
$\|\mathbf a\|=\|\mathbf a\|_2$ denotes the Euclidean norm of $\mathbf a$.
\begin{algorithm}
$\mathbf a^{(1)} := \mathbf a / \|\mathbf a\|$\;
\For { $j = 1,2,\ldots$ }
{
$\mathbf a^{(j+1)} := \mathbf a\circ \mathbf a^{(j)}$\;\label{line:hadamardProductStable}
$\lambda^{(j+1)} := \langle \mathbf a^{(j)},\mathbf a^{(j+1)}\rangle$ \tcp{Rayleigh quotient}\label{line:rayleighQuotient}
$\mathbf a^{(j+1)} \leftarrow \mathbf a^{(j+1)}/\|\mathbf a^{(j+1)}\|$\;
}
\caption{Power iteration: the maximum norm $\|\mathbf a\|_\infty$ of a tensor $\mathbf a\in\mathbb{R}^I$, $I = \bigtimes_{\mu = 1}^d I_\mu$, is approximated by  
$|\lambda^{(j+1)}|$, $j\in\mathbb{N}$, if only one index $\mathbf i\in I$ with 
$\|\mathbf a\|_\infty = |\mathbf a[\mathbf i]|$ exists.}
\label{algorithm:powerIteration}
\end{algorithm}

\paragraph*{Improvement of the Rayleigh quotient}
Due to the diagonal structure of the eigenvalue problem in 
\eqref{equation:maximumNormAsEigenvalue}, the Rayleigh quotient 
in line~\ref{line:rayleighQuotient} of Algorithm~\ref{algorithm:powerIteration} can be improved, 
which results in
a new estimator $\alpha^{(j+1)} :=\|\mathbf a^{(j+1)}\|$, $j\in\mathbb{N}$, cf. 
Algorithm~\ref{algorithm:powerIterationImproved}.
\begin{lemma}[Improved estimator for the power iteration]\label{lemma:improvedEstimator}
The new estimator $\alpha^{(j+1)}$, $j\in\mathbb{N}$, of
Algorithm~\ref{algorithm:powerIterationImproved} is at least as good as the Rayleigh quotient 
$\lambda^{(j+1)}$, $j\in\mathbb{N}$, from the standard power iteration, cf. 
Algorithm~\ref{algorithm:powerIteration}:
\begin{equation}\label{equation:rayleighImprovement}
|\lambda^{(j+1)}|\leq \alpha^{(j+1)},\qquad j \in\mathbb N,
\end{equation}
and always converges to $\|\mathbf a\|_\infty$ from below: $\alpha^{(j+1)}\uparrow\|\mathbf a\|_\infty$,
$j\to\infty$.
\end{lemma}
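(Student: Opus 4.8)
The plan is to reduce everything to the scalar power sums $S_k := \sum_{\mathbf i\in I}\mathbf a[\mathbf i]^k$, $k\in\mathbb N_0$, and to recognise both asserted properties as instances of the Cauchy--Schwarz inequality. First I would make the iterates explicit. Since $\mathbf a\circ(\cdot)$ is multiplication by the diagonal matrix $D$ from \eqref{equation:tensorAsDiagMatrix}, a straightforward induction (normalisation absorbs all intermediate scalings) shows that the normalised iterate equals $\mathbf a^{(j)}=\mathbf a^{\circ j}/\|\mathbf a^{\circ j}\|$, where $\mathbf a^{\circ j}$ denotes the entrywise power $\mathbf a^{\circ j}[\mathbf i]=\mathbf a[\mathbf i]^{j}$ and $\|\mathbf a^{\circ j}\|^2=S_{2j}$. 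Assuming $\mathbf a\neq\mathbf 0$ (otherwise the claim is trivial) this is well defined, and the two estimators become
\begin{equation*}
\lambda^{(j+1)}=\langle\mathbf a^{(j)},\mathbf a\circ\mathbf a^{(j)}\rangle=\frac{S_{2j+1}}{S_{2j}},
\qquad
\alpha^{(j+1)}=\|\mathbf a\circ\mathbf a^{(j)}\|=\sqrt{\frac{S_{2j+2}}{S_{2j}}}.
\end{equation*}

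For the improvement \eqref{equation:rayleighImprovement} I would apply Cauchy--Schwarz to the vectors with entries $\mathbf a[\mathbf i]^{j}$ and $\mathbf a[\mathbf i]^{j+1}$, which gives $S_{2j+1}^{2}\le S_{2j}\,S_{2j+2}$; dividing by $S_{2j}^{2}>0$ and taking square roots yields exactly $|\lambda^{(j+1)}|\le\alpha^{(j+1)}$. The absolute value on the left is essential here, since $S_{2j+1}$ may be negative when $\mathbf a$ has negative entries, whereas $S_{2j}$ and $S_{2j+2}$ are always nonnegative.

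For the convergence from below I would argue in three steps. Boundedness $\alpha^{(j+1)}\le\|\mathbf a\|_\infty$ follows from the entrywise estimate $\mathbf a[\mathbf i]^{2}\le\|\mathbf a\|_\infty^{2}$, which gives $S_{2j+2}=\sum_{\mathbf i}\mathbf a[\mathbf i]^{2j}\,\mathbf a[\mathbf i]^{2}\le\|\mathbf a\|_\infty^{2}\,S_{2j}$. Monotonicity $\alpha^{(j+1)}\le\alpha^{(j+2)}$ is again Cauchy--Schwarz, now applied to the entries $\mathbf a[\mathbf i]^{j}$ and $\mathbf a[\mathbf i]^{j+2}$: this produces $S_{2j+2}^{2}\le S_{2j}\,S_{2j+4}$, i.e. the log-convexity of the sequence $m\mapsto S_{2m}$, which rearranges to $S_{2j+2}/S_{2j}\le S_{2j+4}/S_{2j+2}$ and hence to $\alpha^{(j+1)}\le\alpha^{(j+2)}$. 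Finally, to identify the limit I would factor out the largest entry: with $M:=\|\mathbf a\|_\infty$ and $b_{\mathbf i}:=\mathbf a[\mathbf i]/M\in[-1,1]$ one has $\alpha^{(j+1)}=M\sqrt{(\sum_{\mathbf i}b_{\mathbf i}^{2j+2})/(\sum_{\mathbf i}b_{\mathbf i}^{2j})}$, and since $b_{\mathbf i}^{2j}\to\mathbf 1_{\{|b_{\mathbf i}|=1\}}$ as $j\to\infty$, both sums converge to the number of maximising indices, so the quotient tends to $1$ and therefore $\alpha^{(j+1)}\uparrow M=\|\mathbf a\|_\infty$.

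The argument is essentially mechanical once the power-sum reformulation is in place; the only point requiring care is the sign bookkeeping for the odd sums $S_{2j+1}$, together with the observation that the improvement estimate and the monotonicity are literally the \emph{same} Cauchy--Schwarz inequality applied to consecutive even power sums (equivalently, the log-convexity of $m\mapsto S_{2m}$). I expect no genuine obstacle beyond ensuring $\mathbf a\neq\mathbf 0$, so that all denominators $S_{2j}$ are strictly positive.
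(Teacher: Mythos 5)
Your proposal is correct, and its first half coincides with the paper's argument: your inequality $S_{2j+1}^{2}\le S_{2j}\,S_{2j+2}$ is exactly the paper's Cauchy--Schwarz step $|\langle \mathbf a^{(j)},\mathbf a^{(j+1)}\rangle|\le\|\mathbf a^{(j)}\|\,\|\mathbf a^{(j+1)}\|$ written out in coordinates, combined with the same normalization $\|\mathbf a^{(j)}\|=1$. The convergence proof, however, takes a genuinely different route. The paper also derives the upper bound $\alpha^{(j+1)}\le\|\mathbf a\|_\infty$, but then it writes $\|\mathbf a^{\circ j}\|=\|\mathbf a\|_{2j}^{j}$ and invokes the H\"older norm equivalence $\|\mathbf a\|_{2j}\le N^{1/(2j(j+1))}\|\mathbf a\|_{2(j+1)}$ together with $\|\mathbf a\|_\infty\le\|\mathbf a\|_{2(j+1)}$ to obtain the quantitative sandwich $N^{-1/(2(j+1))}\|\mathbf a\|_\infty\le\alpha^{(j+1)}\le\|\mathbf a\|_\infty$, $N=\#I$, from which convergence follows. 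You instead prove monotonicity $\alpha^{(j+1)}\le\alpha^{(j+2)}$ from the log-convexity $S_{2j+2}^{2}\le S_{2j}\,S_{2j+4}$ of the even power sums, and you identify the limit by factoring out $M=\|\mathbf a\|_\infty$ and passing to the pointwise limit in the finite sums. Each approach buys something the other lacks. The paper's argument yields the explicit error bound $0\le\|\mathbf a\|_\infty-\alpha^{(j+1)}\le\bigl(1-N^{-1/(2(j+1))}\bigr)\|\mathbf a\|_\infty$, which is reused later in the paper as the a~priori error estimate for the diagonal power iteration; your qualitative limit argument gives no rate. Conversely, your log-convexity step actually establishes the monotone increase asserted by the notation $\alpha^{(j+1)}\uparrow\|\mathbf a\|_\infty$, which the paper's sandwich argument alone does not literally prove, so in that respect your proof is the more complete one. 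Your bookkeeping on the two genuinely delicate points --- strict positivity of the denominators $S_{2j}$ for $\mathbf a\neq\mathbf 0$, and the possible negativity of the odd sums $S_{2j+1}$ --- is also sound.
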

\begin{proof}
Using the Cauchy-Schwarz inequality and $\|\mathbf a^{(j)}\| = 1$ in 
line~\ref{line:rayleighQuotient} of Algorithm~\ref{algorithm:powerIteration} yields
\begin{equation*}
|\lambda^{(j+1)}| = |\langle \mathbf a^{(j)},\mathbf a^{(j+1)}\rangle|
\leq \|\mathbf a^{(j)}\| \|\mathbf a^{(j+1)}\| = \|\mathbf a^{(j+1)}\| = \alpha^{(j+1)},
\end{equation*}
i.e. \eqref{equation:rayleighImprovement}. 
By induction one easily verifies that 
$\mathbf a^{(j)} = \mathbf a^{\circ j}/\|\mathbf a^{\circ j}\|$, $j\in\mathbb{N}$, 
holds in line~\ref{line:improvedRayleighQuotient} of Algorithm~\ref{algorithm:powerIterationImproved}, where
$\mathbf a^{\circ j}$ denotes the $j$-fold Hadamard product of $\mathbf a$. 
This implies
\begin{equation}\label{equation:alphajLowerBound}
\alpha^{(j+1)} = \|\mathbf a\circ \mathbf a^{(j)}\| 
= \frac{\|\mathbf a\circ \mathbf a^{\circ j}\|}{\|\mathbf a^{\circ j}\|} \leq
\frac{\|\mathbf a\|_\infty \|\mathbf a^{\circ j}\|}{\|\mathbf a^{\circ j}\|} = \|\mathbf a\|_\infty,
\end{equation}
in line~\ref{line:improvedRayleighQuotient}, i.e. $\alpha^{(j+1)}$, $j\in\mathbb{N}$, are lower
bounds of $\|\mathbf a\|_\infty$.
We now use
\begin{equation}\label{equation:powerNorm}
\|\mathbf a^{\circ j}\| = \left(\sum_{\mathbf i\in I}\mathbf a[\mathbf i]^{2j}\right)^{1/2}
 = \left(\left(\sum_{\mathbf i\in I}\mathbf a[\mathbf i]^{2j}\right)^{1/2j}\right)^j
  = \|\mathbf a \|_{2j}^j
\end{equation}
and the norm equivalence for $p$-norms on $\mathbb R^I$ which follows from the H\"older inequality:
\begin{equation}\label{equation:normEquivalence}
\|\mathbf a\|_{2j} \leq N^{\frac{1}{2j} - \frac{1}{2(j+1)}} \|\mathbf a\|_{2(j+1)}
 = N^{\frac{1}{2j(j+1)}} \|\mathbf a\|_{2(j+1)},
\qquad N = \# I,
\end{equation}
as well as 
\begin{equation}\label{equation:inequalityMaxNormPNorm}
\|\mathbf a\|_\infty \leq \|\mathbf a\|_{2(j+1)},
\end{equation}
in order to get a lower bound for $\alpha^{(j+1)}$, $j\in \mathbb{N}$:
\begin{align}
\alpha^{(j+1)} & \underset{\eqref{equation:alphajLowerBound}}{=} \notag
\frac{\|\mathbf a\circ \mathbf a^{\circ j}\|}{\|\mathbf a^{\circ j}\|}
 = \frac{\|\mathbf a^{\circ(j+1)}\|}{\|\mathbf a^{\circ j}\|}
 \underset{\eqref{equation:powerNorm}}{=}
 \frac{\|\mathbf a\|_{2(j+1)}^{j+1}}{\|\mathbf a\|_{2j}^j}
 \underset{\eqref{equation:normEquivalence}}{\geq} \frac{\|\mathbf a\|_{2(j+1)}^{j+1}}{N^{1/(2(j+1))} \|\mathbf a\|_{2(j+1)}^j}\\
 & = N^{-\frac{1}{2(j+1)}} \|\mathbf a\|_{2(j+1)}
 \underset{\eqref{equation:inequalityMaxNormPNorm}}{\geq}
 N^{-\frac{1}{2(j+1)}} \|\mathbf a\|_\infty. \label{equation:alphajUpperBound}
\end{align}
Combining the bounds \eqref{equation:alphajLowerBound} and \eqref{equation:alphajUpperBound} for
$\alpha^{(j+1)}$ yields
\begin{equation}\label{equation:convergenceEstimation}
0\leq \|\mathbf a\|_\infty - \alpha^{(j+1)} \leq (1-N^{-\frac{1}{2(j+1)}})\|\mathbf a\|_\infty,
\end{equation}
i.e. $\alpha^{(j+1)}\uparrow \|\mathbf a\|_\infty$ for $j\to\infty$ since
$1-N^{-\frac{1}{2(j+1)}}\to 0$, $j\to\infty$.
\end{proof}
\begin{algorithm}
$\mathbf a^{(1)} := \mathbf a / \|\mathbf a\|$\;
\For { $j = 1,2,\ldots$ }
{
$\mathbf a^{(j+1)} := \mathbf a\circ \mathbf a^{(j)}$\; \label{line:newIterateStandard}
$\alpha^{(j+1)} := \|\mathbf a^{(j+1)}\|$ \tcp{improved estimator}\label{line:improvedRayleighQuotient}
$\mathbf a^{(j+1)} \leftarrow \mathbf a^{(j+1)}/\|\mathbf a^{(j+1)}\|$\;
}
\caption{Power iteration: $\alpha^{(j+1)}\uparrow \|\mathbf a\|_\infty$ for $j\to\infty$, where $\alpha^{(j+1)}$ is bounded from below by the absolute value $|\lambda^{(j+1)}|$ of the Rayleigh quotient from Algorithm~\ref{algorithm:powerIteration}, cf. Lemma~\ref{lemma:improvedEstimator}.}
\label{algorithm:powerIterationImproved}
\end{algorithm}

The following example shows that the estimator $\alpha^{(j+1)}$ from 
Algorithm~\ref{algorithm:powerIterationImproved} is indeed better than the Rayleigh quotient from
the standard power iteration:

\paragraph*{Example}
For the vector (order-$1$ tensor) $\mathbf a = [1,-1]\in\mathbb{R}^2$ 
the standard power iteration from Algorithm~\ref{algorithm:powerIteration} does not converge to
$\|\mathbf a\|_\infty$:
\begin{equation*}
\lambda^{(j+1)} = 0,\qquad j\in\mathbb{N}.
\end{equation*}
The estimator $\alpha^{(j+1)}$ from Algorithm~\ref{algorithm:powerIterationImproved}, 
however, immediately converges to $\|\mathbf a\|_\infty$:
\begin{equation*}
\alpha^{(j+1)} = 1 = \|\mathbf a\|_\infty,\qquad  j\in\mathbb{N}.
\end{equation*}

\paragraph*{Error estimator for Algorithm~\ref{algorithm:powerIterationImproved}}
Since Algorithm~\ref{algorithm:powerIterationImproved} 
is a power iteration for $\mathbf a\in\mathbb R^I$ with an improved
estimator for the eigenvalue, cf. Lemma~\ref{lemma:improvedEstimator}, any error 
estimator known for the standard power iteration can 
also be applied to Algorithm~\ref{algorithm:powerIterationImproved}. 
Since in our framework the underlying matrix is diagonal, cf. \eqref{equation:tensorAsDiagMatrix}, 
results for power iteration with symmetric matrices may be applied.
E.g. in \cite{Kucz92} the error ${\varepsilon^{(j+1)} :=(\|\mathbf a\|_\infty - \alpha^{(j+1)}) / \|\mathbf a\|_\infty}$, $j\in\mathbb N$,
is proven to be bounded by 
\begin{equation}\label{equation:errorPowerIterationSymmetric}
\frac{\ln(N)}{j-1}, \quad j\geq 2,\quad N = \# I, 
\end{equation}
if the corresponding matrix
is symmetric positive definite, which in our framework means that all entries of $\mathbf a$ 
are positive, cf. \eqref{equation:tensorAsDiagMatrix}.

However, for our special case of a power iteration on a diagonal matrix $D$, the bound
\begin{equation}\label{equation:errorPowerIterationDiagonal}
\varepsilon^{(j+1)}\leq 1-N^{-\frac{1}{2(j+1)}}
\end{equation}
can directly be obtained from \eqref{equation:convergenceEstimation} in the proof of 
Lemma~\ref{lemma:improvedEstimator}. Asymptotically the bounds 
\eqref{equation:errorPowerIterationSymmetric} and \eqref{equation:errorPowerIterationDiagonal} 
show the same behavior, where the bound in \eqref{equation:errorPowerIterationDiagonal} 
is always better than the one in \eqref{equation:errorPowerIterationSymmetric}.
Assuming an error which grows like the bound \eqref{equation:errorPowerIterationSymmetric} leads
to a slow convergence:
\begin{equation}\label{equation:badConvergence}
\varepsilon^{(j+1)} \approx \left(1-\frac{1}{j-1}\right)\varepsilon^{(j)}.
\end{equation}

Even though the above error estimates may be pessimistic in some cases, 
our experiments show that the convergence behavior \eqref{equation:badConvergence} in fact
occurs for many standard examples, cf. Figures~\ref{figure:powerIterationSimple} and
\ref{subfigure:histogramPI}.

\paragraph*{Complexity of the power iteration}
In the HT-format all tensor operations involved in 
Algorithm~\ref{algorithm:powerIterationImproved} can be
carried out directly in the HT-format with a complexity depending linearly on the tensor order
$d$ and the mode sizes.
We can thus expect such a linear complexity for the approximation of $\|\mathbf a\|_\infty$ 
by Algorithm~\ref{algorithm:powerIterationImproved} 
with $\mathbf a\in\mathbb R^I$, $I = \bigtimes_{\mu = 1}^d I_\mu$, in HT-representation,
which is verified by the experiments in Section~\ref{subsection:experimentsArgmax},
cf. Figure~\ref{figure:argmaxBinarySearch}.

\paragraph*{Truncation}
Since the computation of Hadamard products increases the representation ranks of the iterates,
we include truncations in each step of Algorithm~\ref{algorithm:powerIterationImproved} 
in order to keep the 
representation ranks of the iterates small enough. 
We compute the truncations by a hierarchical singular value decomposition (HSVD) in the
HT-format \cite{Gr10}.
A detailed analysis of truncations in iterative fixed-point like processes
is given by \cite{Ha08}: if the truncation errors are small enough and the starting tensor is
chosen appropriately, the iterative process still
converges to the original fixed point, provided that the latter allows for a respective 
low-rank 
representation.
In our numerical experiments, cf. Section~\ref{section:experiments}, we truncate w.r.t. fixed
prescribed HT-ranks
in each step of the algorithms, which already worked well.
Another option would be to truncate w.r.t. a given tolerance $\varepsilon > 0$ for the 
relative truncation error, which, however, may lead to high representation ranks in the beginning
of the algorithms, since the underlying error estimations for the HT-format can be rather
pessimistic, cf. \cite{Gr10}.

Notice that it is possible to construct counterexamples where any truncation leads to the loss of
that entry of $\mathbf a\in\mathbb{R}^I$ which corresponds to $\|\mathbf a\|_\infty$, i.e. to a
wrong approximation of $\|\mathbf a\|_\infty$ in the end.
Such counterexamples can be constructed as in \eqref{equation:counterexample}: take $\mathbf a = M$ (order-$2$ tensor) as defined in
\eqref{equation:counterexample} with $\sigma_1 = (n-1)\varepsilon$ and
$\sigma_2 = 1$, $1/(n-1) < \varepsilon < 1$.
A rank-$1$ best approximation of $M$ is given by $M_1$, cf.
\eqref{equation:counterexample}, i.e. after truncation, instead of
$\| M\|_\infty = 1$, the maximum norm 
$\|M_1\|_\infty = \varepsilon$ would be computed.
This example shows that it may happen that a single entry of larger absolute value 
(the entry $1$ of $M$) is "hidden" by many entries of smaller absolute value 
(the entries $\varepsilon$ of $M$).
The reason for this is that the truncation, cf. \cite{Gr10}, is based on the singular value 
decomposition, 
which corresponds to a best approximation in the Euclidean sense and not w.r.t. the maximum norm.
Nevertheless this does not stand in contradiction to the convergence analysis developed 
in \cite{Ha08} since
the truncation of $M$ does not meet the underlying assumptions:
the starting tensor $\mathbf a/\|\mathbf a\| = M / \|M\|_F$ of the 
iterative process is far away from the fixed point $M_2$ 
($\|M_2\|_F = 1$), 
cf. \eqref{equation:counterexample}:
\begin{align*}
\left\|\frac{M}{\|M\|_F} - M_2\right\|_F &= 
\sqrt{\frac{\bigl(1-\sqrt{(n-1)^2\varepsilon^2 + 1}\bigr)^2 + (n-1)^2\varepsilon^2}{(n-1)^2\varepsilon^2 + 1}} \\
& \geq \sqrt{\frac{(n-1)^2\varepsilon^2}{1+(n-1)^2\varepsilon^2}} \to 1\;,
\qquad
n\to\infty.
\end{align*}

\section{Estimating the maximum norm by Ritz values}\label{section:rayritz}

In order to accelerate the convergence of the power iteration, we make use of an orthogonal 
projection technique
which uses not only the iterate $\mathbf a^{(j+1)}$ of the current step $j\in \mathbb N$
for the estimation of $\|\mathbf a\|_\infty$,
cf. line~\ref{line:improvedRayleighQuotient} in Algorithm~\ref{algorithm:powerIterationImproved}, 
but the latest $k$ iterates, i.e. $\mathbf a^{(j-k+2)},\ldots ,\mathbf a^{(j+1)}$, for some 
$k\in\mathbb{N}$, $k\leq j+1$.
The resulting method is often referred to as Rayleigh-Ritz method, cf. \cite{BaiDemmel2000}:

\paragraph*{Rayleigh-Ritz method}
The idea is to approximate an exact eigenvector $\mathbf v\in\mathbb R^I$ of the underlying
eigenvalue problem $D\mathbf v = \lambda\mathbf v$, $|\lambda| = \|\mathbf a\|_\infty$, 
cf. \eqref{equation:tensorAsDiagMatrix}, 
\eqref{equation:maximumNormAsEigenvalue}, by a linear combination of the latest $k$
iterates $\mathbf a^{(j-k+2)},\ldots ,\mathbf a^{(j+1)}$, $k\leq j+1$, 
which means that an approximation $\tilde{\mathbf v}$ of $\mathbf v$ is searched for in the 
corresponding subspace
\begin{equation}\label{equation:rayritzSubspace}
U_k :=\spn\{ \mathbf a^{(j-k+2)},\ldots ,\mathbf a^{(j+1)}\}.
\end{equation}
The approximation $\tilde{\mathbf v}$ and a corresponding approximation $\tilde{\lambda}$ of
$\lambda$ are obtained by imposing a Galerkin condition:
\begin{equation*}
D\tilde{\mathbf v} - \tilde\lambda \tilde{\mathbf v} \;\bot\; U_k.
\end{equation*}
Computing an orthonormal basis $\{\mathbf q^{(1)},\ldots ,\mathbf q^{(k)}\}$ of $U_k$
and defining the symmetric matrix $B_k\in\mathbb{R}^{k\times k}$ as 
\begin{equation}\label{equation:symmetricMatrixBk}
B_k := Q_k^\top D Q_k,\qquad Q_k := [\mathbf q^{(1)} \mid\cdots\mid \mathbf q^{(k)}]
\in\mathbb{R}^{I\times k},
\end{equation}
yields 
\begin{equation*}
B_k \cdot w = \tilde\lambda w,
\end{equation*}
i.e. $\tilde\lambda$ is the eigenvalue of $B_k$ which has maximum absolute value, and
$\tilde{\mathbf v} = Q_k w$.
The eigenvalues of $B_k$ are called Ritz values of $D$, i.e. the eigenvalue $\lambda$ of $D$
fulfilling $|\lambda| = \|\mathbf a\|_\infty$ is approximated by the Ritz value $\tilde\lambda$ 
of $D$ which maximizes $|\tilde \lambda|$.

\subsection{Computation of orthonormal bases in the HT-format}\label{subsection:HTQR}
Since we use the HT-format for the representation of low-rank tensors, the iterates
$\mathbf a^{(j-k+2)},\ldots ,\mathbf a^{(j+1)}$ spanning the subspace $U_k$ are tensors in the
HT-format. 
We thus want to compute an orthonormal basis $\{\mathbf q^{(1)},\ldots ,\mathbf q^{(k)}\}$ of $U_k$
directly in the HT-format. This can e.g. be achieved by a Gram-Schmidt process which only involves
the computation of sums and dot products and can thus be carried out directly in the HT-format.

For our experiments in Section~\ref{section:experiments} we used a different procedure which
computes a set $\{\mathbf q^{(1)},\ldots ,\mathbf q^{(k)}\}$ of orthonormal HT-tensors together with
a right factor $R\in\mathbb R^{k\times k}$, such that
\begin{equation}\label{equation:htQR}
[\mathbf a^{(j-k+2)}\mid\cdots \mid\mathbf a^{(j+1)}] = 
[\mathbf q^{(1)}\mid\cdots \mid\mathbf q^{(k)}]\cdot R.
\end{equation}
We now briefly describe this procedure which we refer to as HT-QR decomposition.
In \cite{Ha12} the computation of joint orthonormal bases (ONB) is described for a set of tensors 
in HT-representation: 
at each non-root vertex $t\in T_D$ of the underlying dimension tree $T_D$, a 
QR-decomposition of the matrix containing the corresponding frames of the tensors
$\mathbf a^{(j-k+2)},\ldots ,\mathbf a^{(j+1)}$ is computed:
\begin{equation}\label{equation:frameQR}
[U^{(t,j-k+2)}\mid\cdots\mid U^{(t,j+1)}] = 
Q^{(t)}\cdot [R^{(t,j-k+2)}\mid\cdots\mid R^{(t,j+1)}].
\end{equation}
The orthonormal columns of $Q^{(t)}$ are kept as new (joint, orthonormal) 
basis at the vertex $t$ for all
tensors $\mathbf a^{(j-k+2)},\ldots ,\mathbf a^{(j+1)}$ and the right factors
$R^{(t,j-k+2)},\ldots ,R^{(t,j+1)}$ are multiplied to the father vertices, where the 
procedure continues recursively. By this, joint ONB are installed at all
non-root vertices $t\in T_D\setminus\{D\}$, i.e. the new HT-representations 
of $\mathbf a^{(j-k+2)},\ldots ,\mathbf a^{(j+1)}$ only differ at the root vertex.
At the leaf vertices, a QR-decomposition \eqref{equation:frameQR} can be computed 
directly since the frame matrices $U^{(t,j-k+2)},\ldots ,U^{(t,j+1)}$ are stored 
explicitly. 
At the interior vertices, \eqref{equation:frameQR} can be 
computed by a respective QR-decomposition of
\begin{equation*}
\bigl[\mathcal{M}_{\{2,3\}}(\mathbf b^{(t,j-k+2)})\mid\cdots\mid \mathcal{M}_{\{2,3\}}(\mathbf b^{(t,j+1)})\bigr] \quad\mbox{(transfer tensors as columns)}.
\end{equation*}

Notice that the computation of joint ONB for the set 
$\{\mathbf a^{(j-k+2)},\ldots ,\mathbf a^{(j+1)}\}$ only affects their HT-representations
while the tensors themselves stay invariant. After the HT-representations of
$\{\mathbf a^{(j-k+2)},\ldots ,\mathbf a^{(j+1)}\}$ have been transformed into joint ONB, an
HT-QR decomposition \eqref{equation:htQR} can easily be obtained by computing a 
respective QR-decomposition of the
root transfer tensors $\mathbf b^{(D,j-k+2)},\ldots ,\mathbf b^{(D,j+1)}$. 

The resulting accelerated power iteration is shown in 
Algorithm~\ref{algorithm:powerIterationRay}: we start with 
Algorithm~\ref{algorithm:powerIterationImproved}, where we leave out the computation
of the estimator $\alpha^{(j+1)}$ in each step, which is now computed in the end
as Ritz value of maximum absolute value based on the last $k$ iterates.
\begin{algorithm}
$\mathbf a^{(1)} := \mathbf a / \|\mathbf a\|$\;
\For { $j = 1,2,\ldots$ }
{
$\mathbf a^{(j+1)} := \mathbf a\circ \mathbf a^{(j)}$\; 
$\mathbf a^{(j+1)} \leftarrow \mathbf a^{(j+1)}/\|\mathbf a^{(j+1)}\|$\;
}
$[\mathbf q^{(1)}\mid\cdots\mid \mathbf q^{(k)}] = \mbox{HT-QR}([\mathbf a^{(j-k+2)}\mid\cdots\mid \mathbf a^{(j+1)}])$\tcp*{Compute an ONB}
\For { $i_1 = 1,2,\ldots ,k$}
{
\For { $i_2 = 1,2,\ldots ,k$}
{
$(B_k)[i_1,i_2] = \langle \mathbf q^{(i_1)},\mathbf a\circ \mathbf q^{(i_2)}\rangle$
\tcp*{Matrix $B_k$ from \eqref{equation:symmetricMatrixBk}}
}
}
Compute the eigenvalues $\lambda_\ell$, $\ell = 1,\ldots ,k$, of $B_k$\;
$\alpha^{(j+1)} := \max_{\ell = 1,\ldots ,k}|\lambda_\ell|$ \tcp{Estimator for $\|\mathbf a\|_\infty$}
\caption{Acceleration of the power iteration using a Rayleigh-Ritz type orthogonal projection method: the estimator $\alpha^{(j+1)}$ now depends on the $k$ latest iterates of the power iteration, $1\leq k\leq j+1$.}
\label{algorithm:powerIterationRay}
\end{algorithm}

\section{Acceleration by squaring}\label{section:squaring}

Even though the Rayleigh-Ritz method (Algorithm~\ref{algorithm:powerIterationRay}), 
cf. Section~\ref{section:rayritz}, 
may lead to a notable acceleration of the power iteration 
(Algorithm~\ref{algorithm:powerIterationImproved}), 
cf. Figure~\ref{subfigure:powerIterationProjectionRand} and 
Figure~\ref{figure:histogramPIPR},
there are also cases where the order of convergence stays invariant,
cf. Figure~\ref{subfigure:powerIterationProjectionCheb}.
However, the power iteration including a Rayleigh-Ritz method can well be used to generate 
sufficiently good starting tensors for a faster (but less stable) method, which is presented
in this section.

\paragraph*{Power iteration with squaring}
As stated in the proof of Lemma~\ref{lemma:improvedEstimator}, the $j$-th iterate 
$\mathbf a^{(j)}$ of Algorithm~\ref{algorithm:powerIterationImproved}
is the corresponding normalized power of the original
tensor $\mathbf a\in\mathbb{R}^I$ (if truncations down to smaller HT-representation rank
are neglected):
\begin{equation}\label{equation:iteratePowerIteration}
\mathbf a^{(j)} = \frac{\mathbf a^{\circ j}}{\|\mathbf a^{\circ j}\|},\qquad j\in\mathbb N.
\end{equation}
The power iteration can be sped up by taking the new iterate 
\begin{equation*}
\mathbf a^{(j+1)} := \frac{\mathbf a^{(j)}\circ \mathbf a^{(j)}}{\|\mathbf a^{(j)}\circ \mathbf a^{(j)}\|}
\end{equation*}
instead of line~\ref{line:newIterateStandard} in Algorithm~\ref{algorithm:powerIterationImproved},
which results in Algorithm~\ref{algorithm:powerIterationSquaring}, hereinafter referred to as
power iteration with squaring.
\begin{algorithm}
$\mathbf a^{(1)} := \mathbf a / \|\mathbf a\|$\;
\For { $j = 1,2,\ldots$ }
{
$\mathbf a^{(j+1)} := (\mathbf a^{(j)}\circ\mathbf a^{(j)})/ \|\mathbf a^{(j)}\circ \mathbf a^{(j)}\|$\;\label{line:hadamardProductLessStable}
$\alpha^{(j+1)} := \|\mathbf a\circ \mathbf a^{(j+1)}\|$\;\label{line:alphaSquaring}
}
\caption{Power iteration with squaring: $\alpha^{(j+1)}\uparrow \|\mathbf a\|_\infty$ for 
$j\to\infty$.}
\label{algorithm:powerIterationSquaring}
\end{algorithm}
Without truncations, the iterates of Algorithm~\ref{algorithm:powerIterationSquaring} are also
normalized powers of the original tensor $\mathbf a$:
\begin{equation}\label{equation:iteratePowerIterationSquaring}
\mathbf a^{(j)} = \frac{\mathbf a^{\circ 2^{j-1}}}{\|\mathbf a^{\circ 2^{j-1}}\|},
\end{equation}
which yields the corresponding new expression for $\alpha^{(j+1)}$ in
line~\ref{line:alphaSquaring} of Algorithm~\ref{algorithm:powerIterationSquaring}.

The comparison of \eqref{equation:iteratePowerIteration} and \eqref{equation:iteratePowerIterationSquaring}
shows the immense acceleration which can be expected by using
Algorithm~\ref{algorithm:powerIterationSquaring} instead of 
Algorithm~\ref{algorithm:powerIterationImproved}: the iterate in step $j$ of 
Algorithm~\ref{algorithm:powerIterationSquaring} corresponds to the iterate in step $2^{j-1}$ of
Algorithm~\ref{algorithm:powerIterationImproved}. Assuming the slow convergence
\eqref{equation:badConvergence} for Algorithm~\ref{algorithm:powerIterationImproved}, 
cf. Figures \ref{figure:powerIterationSimple} and \ref{subfigure:histogramPI},
the corresponding convergence behavior of Algorithm~\ref{algorithm:powerIterationSquaring} would be
\begin{equation}\label{equation:linearConvergenceSquaring}
\varepsilon^{(j+1)} \approx \frac{2^{j-1} -2}{2^j - 2}\varepsilon^{(j)}
\leq \frac{1}{2}\varepsilon^{(j)},
\end{equation}
i.e. linear convergence with a convergence rate of $0.5$ or better, which is verified by 
our experiments in Section~\ref{subsection:powerIterationSquaring}, 
cf. Figure~\ref{figure:histogramSQAD}.

\paragraph*{Truncation}
As in Algorithm~\ref{algorithm:powerIterationImproved}, in practice truncations are included into
Algorithm~\ref{algorithm:powerIterationSquaring}, since otherwise the complexity of the involved
tensor arithmetic quickly becomes too large, cf. Section~\ref{subsection:htFormat}.
Algorithm~\ref{algorithm:powerIterationSquaring}, 
however, is less stable than Algorithm~\ref{algorithm:powerIterationImproved},
which can be explained as follows:
consider two tensors $\mathbf x,\, \mathbf y\in\mathbb{R}^I$, $I = \bigtimes_{\mu = 1}^d I_\mu$,
the Hadamard product of which is to be computed. If only $\mathbf x$ is perturbed by
a relative error (due to truncation), 
the computed Hadamard product will have the same relative error 
(notice that $\|\tilde{\mathbf x} - \mathbf x\|_p / \|\mathbf x\|_p\leq \|\mathbf \epsilon\|_\infty$ for any $p$-norm, $1\leq p\leq\infty$, if $\tilde{\mathbf x} = \mathbf x\circ(\mathbf 1\circ \mathbf\epsilon)$):
\begin{equation}\label{equation:hadamardProductStable}
\tilde{\mathbf x} = \mathbf x\circ (\mathbf 1 + \mathbf\epsilon)
 \quad\Rightarrow\quad \tilde{\mathbf x}\circ \mathbf y = (\mathbf x\circ \mathbf y)\circ (\mathbf 1 + \mathbf\epsilon),
\end{equation}
whereas the relative error doubles if $\mathbf x$ and $\mathbf y$ are both afflicted with the relative error:
\begin{equation}\label{equation:hadamardProductLessStable}
\tilde{\mathbf x} = \mathbf x\circ(\mathbf 1 + \mathbf \epsilon),\quad
\tilde{\mathbf y} = \mathbf y\circ(\mathbf 1 + \mathbf \epsilon)\quad\Rightarrow\quad
\tilde{\mathbf x}\circ\tilde{\mathbf y} = 
(\mathbf x \circ \mathbf y)\circ (\mathbf 1 + 2\mathbf \epsilon + \mathbf \epsilon\circ\mathbf \epsilon).
\end{equation}
In line~\ref{line:newIterateStandard} of Algorithm~\ref{algorithm:powerIterationImproved} the
Hadamard product $\mathbf a\circ \mathbf a^{(j)}$ is computed which corresponds to the case
\eqref{equation:hadamardProductStable}, if we assume the original tensor $\mathbf a$ to be
exact. In line~\ref{line:hadamardProductLessStable} of 
Algorithm~\ref{algorithm:powerIterationSquaring}, instead, the Hadamard product
$\mathbf a^{(j)}\circ\mathbf a^{(j)}$ of two truncated tensors is computed, which corresponds to
\eqref{equation:hadamardProductLessStable} with $\mathbf x = \mathbf y$.
Similar arguments apply for the computation of the norms in
Algorithms \ref{algorithm:powerIterationImproved} and \ref{algorithm:powerIterationSquaring}. 

Using the power iteration with squaring, the truncation error of the iterates 
$\mathbf a^{(j)}$, $j\in\mathbb{N}$, thus has to be small enough in 
order for the algorithm to converge to a good approximation of $\|\mathbf a\|_\infty$.
We thus use some steps of Algorithm~\ref{algorithm:powerIterationRay} to generate a
sufficiently good starting tensor for Algorithm~\ref{algorithm:powerIterationSquaring}.

\paragraph*{Algorithm for the estimation of $\|\mathbf a\|_\infty$}
Notice that the iterates $\mathbf a^{(j)}$, $j\in \mathbb N$, in the power iteration eventually
approximate corresponding eigenvectors of the matrix $D$, cf. \eqref{equation:tensorAsDiagMatrix}, 
\eqref{equation:maximumNormAsEigenvalue}, which usually means that they have only few non-zero
entries and thus small HT-ranks, i.e. the truncation error is small for reasonable
representation ranks of the HT-formats. Our strategy is therefore to start with some
steps of Algorithm~\ref{algorithm:powerIterationRay}
in order to get a good starting tensor for 
Algorithm~\ref{algorithm:powerIterationSquaring}, which we sketched out in
Algorithm~\ref{algorithm:adaptive}.
The number of steps of Algorithm~\ref{algorithm:powerIterationRay} is chosen 
adaptively: after $N_{\ref{algorithm:powerIterationRay}}$ steps of 
Algorithm~\ref{algorithm:powerIterationRay} we run
Algorithm~\ref{algorithm:powerIterationSquaring} until the iterates differ only by less
than some prescribed relative accuracy (in our experiments we chose 
$\|\mathbf a^{(j+1)} -\mathbf a^{(j)}\| / \|\mathbf a^{(j+1)}\| < 10^{-13}$ as a 
stopping criterion). If the truncation error becomes too 
large during the execution of Algorithm~\ref{algorithm:powerIterationSquaring}, 
another $N_{\ref{algorithm:powerIterationRay}}$ steps of
Algorithm~\ref{algorithm:powerIterationRay} are applied, and so on.
This leads to $\ell\cdot N_{\ref{algorithm:powerIterationRay}}$ steps of 
Algorithm~\ref{algorithm:powerIterationRay} in order to compute a starting tensor
for Algorithm~\ref{algorithm:powerIterationSquaring}, where $\ell$ is the number of 
loop iterations which is determined adaptively. 
\begin{algorithm}
\KwData{Tensor $\mathbf a$; number $N_{\ref{algorithm:powerIterationRay}}$ of
Algorithm~\ref{algorithm:powerIterationRay}-steps in each loop; upper bound $\varepsilon$ for the relative truncation error in Algorithm~\ref{algorithm:powerIterationSquaring}}
$\mathbf x := \mathbf a$\;
\SetKwRepeat{Do}{do}{while}
\Do {maximum number of iterations not yet reached}
{
$N_{\ref{algorithm:powerIterationRay}}$ steps of Algorithm~\ref{algorithm:powerIterationRay} (including truncations) on $\mathbf x$\;
Store the last iterate of Algorithm~\ref{algorithm:powerIterationRay} in $\mathbf x$\;
\label{line:storeLastIterate}
Apply Algorithm~\ref{algorithm:powerIterationSquaring} (including truncations) to $\mathbf x$\;
\If {truncation error was always smaller than $\varepsilon$}
{
Store the last iterate of Algorithm~\ref{algorithm:powerIterationSquaring} in $\mathbf x$\;\label{line:lastIterate}
\Return the resulting $\|\mathbf a\|_\infty$-approximation of Algorithm~\ref{algorithm:powerIterationSquaring}\;
}
}
\If {maximum number of iterations has been reached}
{
\Return the $\|\mathbf a\|_\infty$-approximation corresponding to the last iterate
$\mathbf x$ in line~\ref{line:storeLastIterate}\;
}
\caption{An adaptive combination of Algorithm~\ref{algorithm:powerIterationRay} and 
Algorithm~\ref{algorithm:powerIterationSquaring} which prevents too large truncation 
errors in Algorithm~\ref{algorithm:powerIterationSquaring}: first a starting tensor 
is computed by $\ell\cdot N_{\ref{algorithm:powerIterationRay}}$ steps of 
Algorithm~\ref{algorithm:powerIterationRay}, where $\ell$ is the number of loop 
iterations. Then Algorithm~\ref{algorithm:powerIterationSquaring} is applied to the
starting tensor until the iterates differ only by less than some prescribed relative 
accuracy.}
\label{algorithm:adaptive}
\end{algorithm}

Our experiments in Section~\ref{subsection:combination} show that this approach seems 
to work well for many tensors: in Figure~\ref{figure:histogramSQAD} we show the 
convergence rates of Algorithms~\ref{algorithm:powerIterationSquaring} and 
\ref{algorithm:adaptive} applied to 1000 random HT-tensors, for which 
Algorithm~\ref{algorithm:powerIterationSquaring} converges in $942/1000$ cases and
Algorithm~\ref{algorithm:adaptive} in $986/1000$ cases with a convergence rate far below
$0.25$.
Furthermore, we give an example for which 
Algorithm~\ref{algorithm:powerIterationSquaring} does not converge to a good 
approximation of $\|\mathbf a\|_\infty$, whereas 
Algorithm~\ref{algorithm:adaptive} performs well,
cf. Figure~\ref{figure:noConvergenceSquaring}.
For our experiments with Algorithm~\ref{algorithm:adaptive} we chose $N_3 = 10$.

\section{Finding extreme entries through a binary search}\label{section:finding}
Some applications may require not only an estimation of the maximum norm 
$\|\mathbf a\|_\infty$ but also an index where the corresponding extremum is attained in the tensor
$\mathbf a\in\mathbb{R}^I$, $I = \bigtimes_{\mu = 1}^d I_\mu$, which means that 
$\argmax_{\mathbf i\in I}|\mathbf a[\mathbf i]|$ is to be computed.

Using Algorithm~\ref{algorithm:adaptive}, this task can be accomplished by a 
binary search, cf. \cite{Knuth398}: in each step of the binary search the tensor $\mathbf a$ 
is divided into halves 
$\mathbf a_{1,\mu}$, $\mathbf a_{2,\mu}$ w.r.t. one of its directions $\mu = 1,\ldots ,d$ and the maximum
norms $\|\mathbf a_{1,\mu}\|_\infty$, $\|\mathbf a_{2,\mu}\|_\infty$ are approximated by 
Algorithm~\ref{algorithm:adaptive}. The binary search then continues on
\begin{equation}\label{equation:binarySearchRecursion}
\begin{array}{ccc}
\mathbf a_{1,\mu} & \qquad\mbox{if}\qquad & \|\mathbf a_{1,\mu}\|_\infty \geq \|\mathbf a_{2,\mu}\|_\infty,\\
\mathbf a_{2,\mu} & \qquad\mbox{if}\qquad & \|\mathbf a_{2,\mu}\|_\infty \geq \|\mathbf a_{1,\mu}\|_\infty.
\end{array}
\end{equation}
If no clear decision can be made, i.e. if $\|\mathbf a_{1,\mu}\|_\infty \approx \|\mathbf a_{2,\mu}\|_\infty$, 
it may
be advantageous to first continue with another tensor direction $\nu = 1,\ldots ,d$, $\nu\neq \mu$.

More precisely, we start with Algorithm~\ref{algorithm:adaptive}
applied on the entire tensor $\mathbf a$ and keep the last iterate $\mathbf a^{(\ell)}$
(i.e. $\mathbf x$ from line~\ref{line:lastIterate} in Algorithm~\ref{algorithm:adaptive}).
Then the
subsequent computations of $\|\mathbf a_{1,\mu}\|_\infty$ and $\|\mathbf a_{2,\mu}\|_\infty$
in the binary search can be carried out very fast by using the estimator
from line~\ref{line:alphaSquaring} of 
Algorithm~\ref{algorithm:powerIterationSquaring} on the corresponding
parts of $\mathbf a^{(\ell)}$.
The resulting procedure is shown in Algorithm~\ref{algorithm:binarySearch}.
\begin{algorithm}
\KwData{tensor $\mathbf a\in\mathbb{R}^I$, $I = \bigtimes_{\mu = 1}^d I_\mu$}
\KwResult{index $\mathbf j = (j_1,\ldots ,j_d)\in I$ such that $\|\mathbf a\|_\infty = |\mathbf a[\mathbf j]|$}
\eIf {$\mathbf a$ is an elementary tensor}
{
    Determine the $\argmax$-index $\mathbf j$ as described in Section~\ref{section:rank1}\;
}
{
Apply Algorithm~\ref{algorithm:adaptive} to $\mathbf a$ and keep the last iterate
$\mathbf a^{(\ell)}$\;\label{line:precomputation}
\eIf {$\mathbf a^{(\ell)}$ is an elementary tensor}
{
Determine the $\argmax$-index $\mathbf j$ as described in Section~\ref{section:rank1}\;
}
{
\While{$\# I > 1$ {\normalfont\bf for} $\mu = 1,\ldots ,d$ }
{
\eIf{ $\#I_\mu > 1$}
{
Cut $\mathbf a$ into two slices of (nearly) same size
\begin{equation*}
\mathbf a_{1,\mu} := \mathbf a\mid_{I_1\times\cdots\times I^{(1)}_\mu\times\cdots\times I_d},\qquad
\mathbf a_{2,\mu} := \mathbf a\mid_{I_1\times\cdots\times I^{(2)}_\mu\times\cdots\times I_d}
\end{equation*}\\
Approximate $\|\mathbf a_{1,\mu}\|_\infty$, $\|\mathbf a_{2,\mu}\|_\infty$ by 
Algorithm~\ref{algorithm:powerIterationSquaring} 
(on $\mathbf a^{(\ell)}$)\;
Continue the binary search according to \eqref{equation:binarySearchRecursion}\;
}
{
$j_\mu := i_\mu$, where $I_\mu = \{i_\mu\}$\;
}
}
}
}
\caption{Binary search for a tensor index $\mathbf j = \argmax_{\mathbf i\in I}|\mathbf a[\mathbf i]|$ which corresponds to the maximum norm $\|\mathbf a\|_\infty$.}
\label{algorithm:binarySearch}
\end{algorithm}

\paragraph*{Complexity}
Algorithm~\ref{algorithm:binarySearch} first determines an eigenvector approximation
$a^{(\ell)}$, cf. line~\ref{line:precomputation}, the computational effort of which can 
be bounded by
$\mathcal{O}(d(nr^2 + r^6))$ flops, cf. 
Sections~\ref{subsection:htFormat} and \ref{section:power},
when all tensor directions are assumed to be of the same size $n = \#I_\mu$, $\mu = 1,\ldots ,d$,
and the components of the HT-ranks are bounded by $r\in\mathbb{N}$.
In each step of the binary search the tensor is divided into halves, only one of which is retained. 
Since the tensor has $n^d$ entries, this yields a number of
\begin{equation}\label{equation:numberOfIterationsSearch}
\log_2(n^d) = d\log_2(n)\qquad\mbox{iterations}
\end{equation}
for Algorithm~\ref{algorithm:binarySearch} (total number of iterations for the 
{\bf while}/{\bf for}-loop.
By computing the eigenvector approximation $\mathbf a^{(\ell)}$ in advance, we only need 
one further Hadamard product and one dot product for the computation of 
$\|\mathbf a_{1,\mu}\|_\infty$ and $\|\mathbf a_{2,\mu}\|_\infty$ in each iteration.
The overall complexity of Algorithm~\ref{algorithm:binarySearch} can be estimated by
\begin{equation}\label{equation:complexitySearchSum}
\underbrace{\mathcal{O}(d(nr^2 + r^6))}_{\mbox{\scriptsize Computation of $a^{(\ell)}$}}
+ \underbrace{d\log_2(n)}_{\mbox{\scriptsize number of iterations}} \cdot\;
\underbrace{\mathcal{O}(d(nr^2 + r^6))}_{\mbox{\scriptsize one Hadamard product + one dot product}}
\end{equation}
flops, i.e. a total complexity of
\begin{equation}\label{equation:complexitySearch}
\mathcal{O}(d^2\log_2(n)(nr^2 + r^6))\qquad\mbox{flops}.
\end{equation}
Our experiments in Section~\ref{subsection:experimentsArgmax} confirm the complexity
estimation \eqref{equation:complexitySearch} for Algorithm~\ref{algorithm:binarySearch}:
Figure~\ref{figure:argmaxBinarySearch} shows an $\mathcal{O}(d^2)$-dependence for the
runtime of Algorithm~\ref{algorithm:binarySearch}, while 
Algorithm~\ref{algorithm:adaptive} has linear complexity in $d$.

However, the number \eqref{equation:numberOfIterationsSearch} of iterations of 
Algorithm~\ref{algorithm:binarySearch} can often be reduced for two reasons:
\begin{enumerate}[(a)]
\item Since the iterates $\mathbf a^{(j+1)}\in\mathbb{R}^I$ of\label{enumerate:eigenvecLowRank1}
Algorithms \ref{algorithm:powerIterationRay}
and \ref{algorithm:powerIterationSquaring} correspond to approximations of eigenvectors of the diagonal matrix
$D\in\mathbb{R}^{I\times I}$, cf. \eqref{equation:tensorAsDiagMatrix},
\eqref{equation:maximumNormAsEigenvalue}, they are typically of very low rank in the end.
As soon as the iterates are rank-$1$ tensors, the $\argmax$ can directly be computed without
further iterations, cf. Section~\ref{section:rank1}.
\item For the same reason as in \eqref{enumerate:eigenvecLowRank1} the iterates
$\mathbf a^{(j+1)}$ typically contain many zero entries which is reflected in zero rows at the 
leaf vertices of the HT-format: any zero row in $U^{(\{\mu\})}$, $\mu = 1,\ldots ,d$, cf.
Definition~\ref{definition:htFormat}, 
can be removed, which reduces the size of the corresponding tensor direction and with that the
number of further iterations.
\end{enumerate}

\paragraph*{Parallelization}
In \cite{GrLoe18} we present parallel algorithms for tensor operations which lead to 
runtimes of 
$\mathcal{O}(\log_2(d)(nr^2 + r^4))$ and $\mathcal{O}(\log_2(d)(nr^2 + r^6))$
instead of $\mathcal{O}(d(nr^2 + r^4))$ and $\mathcal{O}(d(nr^2 + r^6))$ and
thus to a runtime of
\begin{equation*}
\mathcal{O}(d\log_2(n)\log_2(d)(nr^2 + r^6))
\end{equation*}
instead of \eqref{equation:complexitySearch} for the corresponding parallel version of 
Algorithm~\ref{algorithm:binarySearch}, i.e. a nearly linear dependence on $d$ when using
$d$ processes, cf. \cite{Loe19}.

One could go even further and parallelize the {\bf for}-loop in 
Algorithm~\ref{algorithm:binarySearch}: for each direction $\mu = 1,\ldots ,d$ the tensor is
split into $\mathbf a_{1,\mu}$ and $\mathbf a_{2,\mu}$ for which 
$\|\mathbf a_{1,\mu}\|_\infty$ and $\|\mathbf a_{2,\mu}\|_\infty$ are
estimated by 
line~\ref{line:alphaSquaring} of Algorithm~\ref{algorithm:powerIterationSquaring},
which needs one Hadamard product and one dot product.
The Hadamard products for the splittings w.r.t. the 
directions $\mu = 1,\ldots ,d$ can be obtained by splitting the respective Hadamard product
of the whole tensor; the dot products can simultaneously be computed for all splittings occurring
during the {\bf for}-loop.
This approach would lead to a runtime bounded by
\begin{equation*}
\mathcal{O}(\log_2(n)\log_2(d)(nr^2 + r^6)),
\end{equation*}
i.e. a sub-linear dependence on $d$ and $n$ when using $d$ processes. 

\section{Numerical experiments}\label{section:experiments}
We present several numerical experiments for the algorithms discussed in this article.
The experiments indicate that the methods which we propose for the maximum norm estimation (Algorithm~\ref{algorithm:adaptive}) and for the search of the maximizing index
(Algorithm~\ref{algorithm:binarySearch})
seem to work for many tensors which occur in
practical applications. However, in Section~\ref{subsection:noConvergence} we also 
give an example which shows the limit of our method.

Most of our experiments are based on the following two classes of tensors of entirely different
structure, for which we know the exact value of $\|\mathbf a\|_\infty$ as well as an index
$\mathbf i$ with $\|\mathbf a\|_\infty = |\mathbf a[\mathbf i]|$, by which we can  
evaluate the output of our algorithms:
\begin{enumerate}
\item By $\rand(d,n,r)$ we refer to tensors of order $d\in\mathbb{N}$ in HT-representation
with each tensor direction of size $n\in\mathbb{N}$ and all components
of the HT-rank equal to $r$ 
(except for $r_D = 1$, cf. Definition~\ref{definition:htFormat}). 
The HT-data is generated as uniform random samples from $[-1.5,1.5]$, 
where the leaf frames
$U^{(\{\mu\})}$, $\mu = 1,\ldots ,d$, are generated by randomly repeating two random
row vectors of size $r$ such that $\argmax_{\mathbf i\in I}|\mathbf a[\mathbf i]|$ 
can easily be computed exactly for $d=16$, since only
$2^{16}$ different entries occur.
\item By $\cheb(d,n)$ we denote the tensor which results from the discretization of the 
Chebyshev polynomial $T_4$ of degree $4$ on the interval $[-1,1]$ with
$N:=n^d$ equidistant grid points
\begin{equation*}
x_i = -1 + \frac{2(i-1)}{N-1},\qquad i = 1,\ldots ,N.
\end{equation*}
This tensor can easily be assembled in the HT-format
with HT-rank bounded by $4+1=5$, cf. \cite{Gr10tensorization}.
With $\cheb(d,n)$ we thus have at hand example tensors $\mathbf a\in\mathbb{R}^I$ of high tensor 
order $d$ and possibly large mode sizes $n$ with
\begin{equation*}
\|\mathbf a\|_\infty = 1 = |\mathbf a[ 1,\ldots ,1]|.
\end{equation*}
\end{enumerate}
In our experiments we truncate back to the original representation rank of the tensor $\mathbf a$
after each computation of Hadamard products and after each HT-QR decomposition.
However, higher accuracies of the maximum norm estimation can be obtained by truncating 
with higher accuracy, which 
increases the runtime of the algorithms.
Note that for any positive truncation error one can construct counterexamples 
which need higher accuracy
in order to reach a good approximation of $\|\mathbf a\|_\infty$, cf. Section~\ref{subsection:noConvergence}.

\subsection{Power iteration (Algorithm~\ref{algorithm:powerIterationImproved})}
Figure~\ref{figure:powerIterationSimple} shows the relative errors of 
Algorithm~\ref{algorithm:powerIterationImproved} applied to two instances of the class
$\rand(16,100,5)$ and to the tensor $\cheb(16,100)$. 
The convergence rates for $\cheb(16,100)$ as well as for one of the 
$\rand(16,100,5)$-tensors tend to one as indicated by \eqref{equation:badConvergence}.
The other instance of $\rand(16,100,5)$ shows a linear convergence with a convergence 
rate of $\approx 0.52$.

\begin{figure}
\begin{subfigure}[b]{0.48\textwidth}
\input{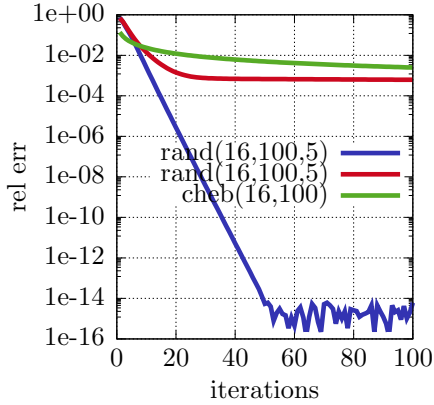}
\caption{Relative errors of Algorithm~\ref{algorithm:powerIterationImproved}
applied to two tensors of type $\rand(16,100,5)$ and to the tensor $\cheb(16,100)$. 
The convergence can be very slow with a convergence rate tending to one,
cf. \eqref{equation:badConvergence}.}
\label{figure:powerIterationSimple}
\end{subfigure}\quad
\begin{subfigure}[b]{0.48\textwidth}
\input{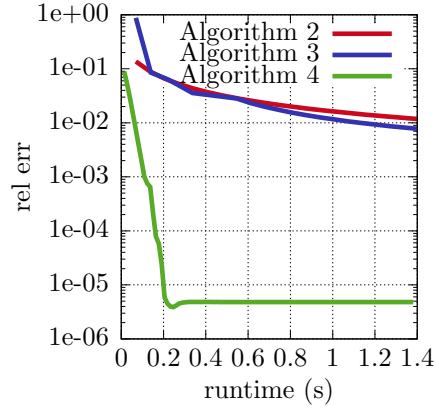}
\caption{Algorithm~\ref{algorithm:powerIterationSquaring} applied to the tensor 
$\cheb(16,100)$ together with the corresponding results from Figure~\ref{subfigure:powerIterationProjectionCheb} (Algorithms~\ref{algorithm:powerIterationImproved} and \ref{algorithm:powerIterationRay}).\\}
\label{figure:powerIterationSquaring}
\end{subfigure}
\caption{Algorithms~\ref{algorithm:powerIterationImproved} and \ref{algorithm:powerIterationSquaring} applied to example tensors.}
\end{figure}

In order to get an impression of how likely the slow convergence behavior
\eqref{equation:badConvergence} occurs for 
Algorithm~\ref{algorithm:powerIterationImproved}, we measured the convergence rates
of Algorithm~\ref{algorithm:powerIterationImproved} for 1000 random tensors of 
$\rand(16,1000,5)$. 
The resulting histogram is depicted in Figure~\ref{subfigure:histogramPI}:
in $24.6\%$ of the cases a convergence rate larger than $0.95$ was measured; 
a convergence rate of $0.75$ or better was observed in $19.2\%$ of the cases.
These results indicate that the sole power iteration in general converges
quite slowly.
\begin{figure}
\begin{subfigure}[b]{0.48\textwidth}
\input{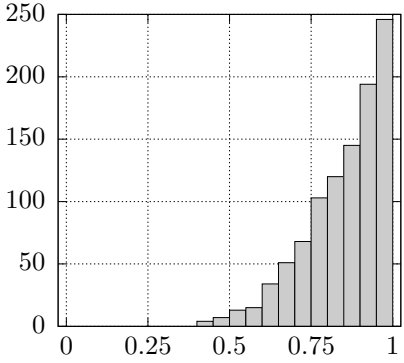}
\caption{Algorithm~\ref{algorithm:powerIterationImproved}}
\label{subfigure:histogramPI}
\end{subfigure}
\begin{subfigure}[b]{0.48\textwidth}
\input{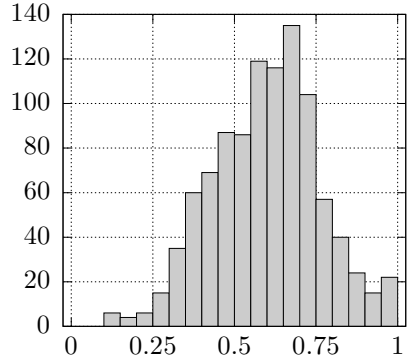}
\caption{Algorithm~\ref{algorithm:powerIterationRay} ($k=5$)}
\label{subfigure:histogramPR}
\end{subfigure}
\caption{Convergence rates of Algorithm~\ref{algorithm:powerIterationImproved} and
Algorithm~\ref{algorithm:powerIterationRay} (with $k=5$) for 1000 random tensors of the class
$\rand(16,1000,5)$. We used the same 1000 tensors for both algorithms.}
\label{figure:histogramPIPR}
\end{figure}

\subsubsection{Measurement of the convergence rate}\label{subsection:measurementRate}
The convergence rates in Figures~\ref{figure:histogramPIPR} and \ref{figure:histogramSQAD}
have been taken as the quotient $\mathrm{error}_{j}/\mathrm{error}_{j-1}$, where $j$
is the last step with a relative error larger than $10^{-12}$.
For Algorithms~\ref{algorithm:powerIterationImproved}, \ref{algorithm:powerIterationRay}, \ref{algorithm:powerIterationSquaring} we performed 40 steps in total, whereas
the number of steps in Algorithm~\ref{algorithm:adaptive} is chosen adaptively, cf.
Section~\ref{section:squaring}.

\subsection{Power iteration with Rayleigh-Ritz method (Algorithm~\ref{algorithm:powerIterationRay})}
In Figure~\ref{figure:powerIterationProjection} we apply 
Algorithm~\ref{algorithm:powerIterationRay} to the slowly converging tensors from
Figure~\ref{figure:powerIterationSimple}. It turns out that in one case
(the slowly converging instance of $\rand(16,100,5)$ from 
Figure~\ref{figure:powerIterationSimple}) we get linear convergence of 
Algorithm~\ref{algorithm:powerIterationRay}, whereas the convergence rate for
$\cheb(16,100)$ still tends to one, i.e. the convergence behavior does not improve in 
this case.
\begin{figure}
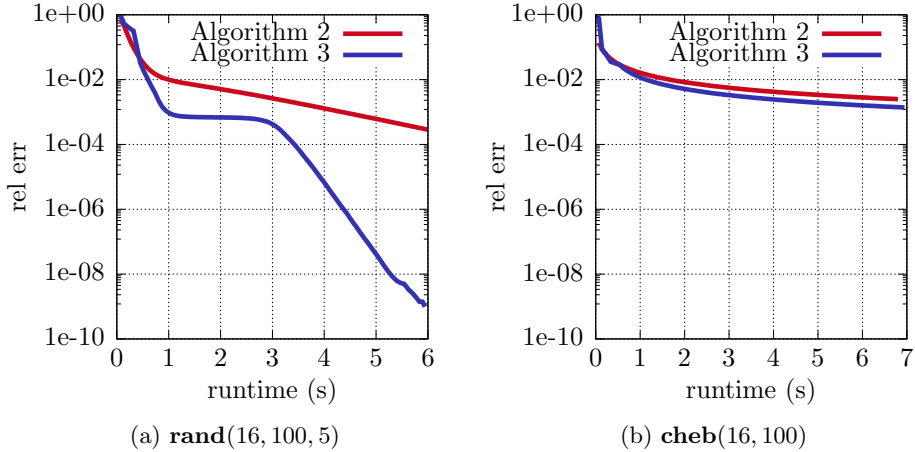

\begin{subfigure}[b]{0.48\textwidth}
\input{plot-rand-16-100-5-PR.tex}
\caption{$\rand(16,100,5)$}
\label{subfigure:powerIterationProjectionRand}
\end{subfigure}
\begin{subfigure}[b]{0.48\textwidth}
\input{plot-cheb-16-100-5-PR.tex}
\caption{$\cheb(16,100)$}
\label{subfigure:powerIterationProjectionCheb}
\end{subfigure}
\caption{Improvement achieved by Algorithm~\ref{algorithm:powerIterationRay} (with $k=5$)
applied to 
the slowly converging tensors from Figure~\ref{figure:powerIterationSimple}: for the
$\rand(16,100,5)$-tensor Algorithm~\ref{algorithm:powerIterationRay} now shows linear
convergence with rate $\approx 0.71$, whereas the convergence behavior for 
$\cheb(16,100)$ is still bad (convergence rate tending to one).}
\label{figure:powerIterationProjection}
\end{figure}

In order to get an idea of how much Algorithm~\ref{algorithm:powerIterationRay} improves
Algorithm~\ref{algorithm:powerIterationImproved}, we applied 
Algorithm~\ref{algorithm:powerIterationRay} to the same 1000 tensors which have been
used for the histogram in Figure~\ref{subfigure:histogramPI}, cf. 
Figure~\ref{subfigure:histogramPR}.
Based on Figure~\ref{figure:histogramPIPR}, 
Algorithm~\ref{algorithm:powerIterationRay} seems to yield a notable acceleration of
the convergence speed. Notice, however, that Figure~\ref{figure:histogramPIPR} only
reflects the the class $\rand(16,1000,5)$ of random tensors.

For our experiments we used a Rayleigh-Ritz method w.r.t. the latest $k=5$ iterates in 
each step, i.e. the subspace \eqref{equation:rayritzSubspace} has dimension $5$. 
In order to compare Algorithms~\ref{algorithm:powerIteration} and 
\ref{algorithm:powerIterationRay}, we applied the Rayleigh-Ritz method in each step 
of Algorithm~\ref{algorithm:powerIterationRay}.
In all of our experiments
$k=5$ has proven to be a suitable choice:
the average relative error for 100 instances of 
$\rand(d,n,4)$ after 100 steps of power iteration and subsequent Rayleigh-Ritz method 
with subspace dimension
$k$ was minimized for $k=5$ or $k=6$, regardless of the tensor size.
If $k$ is chosen too small, the resulting Rayleigh-Ritz method has only
little effect. If, on the other hand, $k$ is chosen too large, the 
orthogonalization (HT-QR followed by a truncation, cf. Section~\ref{subsection:HTQR}) 
might not be exact enough which seems to
lead to even higher errors than without the Rayleigh-Ritz method.

Compared to the overall computational work of the power iteration, the effort for the subsequent 
Rayleigh-Ritz method is small.

\subsection{Power iteration with squaring (Algorithm~\ref{algorithm:powerIterationSquaring})}\label{subsection:powerIterationSquaring}
A significant acceleration of the convergence can be achieved by using 
Algorithm~\ref{algorithm:powerIterationSquaring} described in Section~\ref{section:squaring}.
The resulting relative errors for the tensor $\cheb(16,100)$ are depicted in 
Figure~\ref{figure:powerIterationSquaring} in comparison with the errors from 
Figure~\ref{subfigure:powerIterationProjectionCheb}:
Algorithm~\ref{algorithm:powerIterationSquaring} converges almost
instantaneously to a relative error of about $5\cdot 10^{-6}$, where it stagnates due to
truncations of the iterates down to lower HT-representation ranks.

Figure~\ref{subfigure:histogramSQ} shows the corresponding histogram for 
Algorithm~\ref{algorithm:powerIterationSquaring} applied to the same 1000 tensors used
for Figure~\ref{figure:histogramPIPR}: as predicted in Section~\ref{section:squaring},
cf. \eqref{equation:linearConvergenceSquaring}, in most cases we observe a convergence
rate of less than $0.5$. However, in $5.8\%$ of the cases the convergence rate is close
to one, which reflects those cases where Algorithm~\ref{algorithm:powerIterationSquaring}
does not converge (i.e. stagnates early\footnote{A stagnation of the algorithm at some relative error above $10^{-12}$ is counted as convergence rate $\approx 1.0$ in our experiments, cf. Section~\ref{subsection:measurementRate}.}) because of the truncation errors becoming to large, cf.
\eqref{equation:hadamardProductStable}, \eqref{equation:hadamardProductLessStable}.
This stability issue of Algorithm~\ref{algorithm:powerIterationSquaring} can be 
improved by using Algorithm~\ref{algorithm:adaptive} which is an adaptive combination
of Algorithms~\ref{algorithm:powerIterationRay} 
and \ref{algorithm:powerIterationSquaring}:
first a good enough starting vector is computed by the more stable 
Algorithm~\ref{algorithm:powerIterationRay} and then the fast convergence of
Algorithm~\ref{algorithm:powerIterationSquaring} is exploited using this starting vector,
cf. Section~\ref{section:squaring}.
\begin{figure}
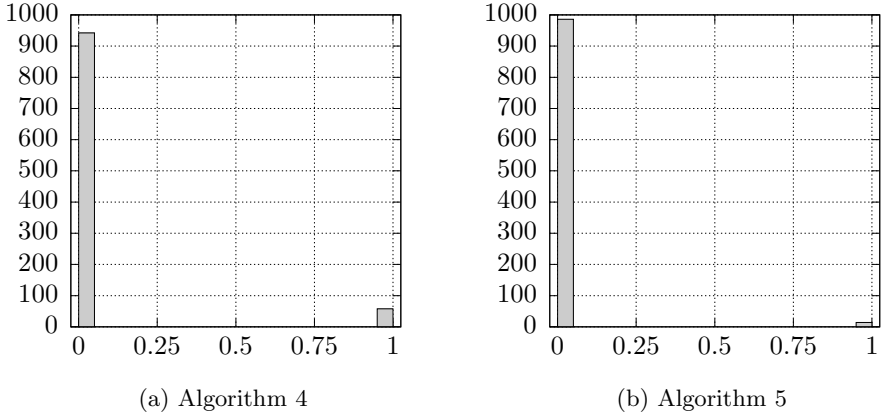

\begin{subfigure}[b]{0.48\textwidth}
\input{plot-hist-SQ.tex}
\caption{Algorithm~\ref{algorithm:powerIterationSquaring}}
\label{subfigure:histogramSQ}
\end{subfigure}
\begin{subfigure}[b]{0.48\textwidth}
\input{plot-hist-AD.tex}
\caption{Algorithm~\ref{algorithm:adaptive}}
\label{subfigure:histogramAD}
\end{subfigure}
\caption{Convergence rates of Algorithms~\ref{algorithm:powerIterationSquaring} and
\ref{algorithm:adaptive} for the 
same 1000 tensors of $\rand(16,1000,5)$ as in Figure~\ref{figure:histogramPIPR}:
we measured very small convergence rates far below $0.25$. In $58/1000$ cases 
Algorithm~\ref{algorithm:powerIterationSquaring} does not converge (i.e. stagnates early) because of too large
truncation errors, cf. \eqref{equation:hadamardProductLessStable}. For $44$ 
(i.e. $75\%$) of these tensors, however,
convergence can be achieved by using Algorithm~\ref{algorithm:adaptive} instead.
In Algorithm~\ref{algorithm:adaptive} we applied $N_3 = 10$ more steps of 
Algorithm~\ref{algorithm:powerIterationRay} in each cycle.}
\label{figure:histogramSQAD}
\end{figure}

\subsection{Power iteration with squaring and stabilization (Algorithm~\ref{algorithm:adaptive})}
\label{subsection:combination}
In Figure~\ref{figure:noConvergenceSquaring} we give an example of a tensor for which 
Algorithm~\ref{algorithm:powerIterationSquaring} does not converge,
whereas Algorithm~\ref{algorithm:adaptive} (including 2 steps of 
Algorithm~\ref{algorithm:powerIterationRay} for the computation of the starting 
tensor) converges.
The example tensor of Figure~\ref{figure:noConvergenceSquaring} was found as random
tensor\footnote{The random tensor of Figure~\ref{figure:noConvergenceSquaring} is of size
$2\times 2\times 4\times 4\times 2\times 2\times 6\times 8$ and was generated by the 
{\it htucker} MATLAB toolbox which uses the MATLAB function {\tt randn} to generate
normally perturbed random data for an HT-representation.}
generated by the MATLAB toolbox {\it htucker} \cite{Kressner14}.

\begin{figure}
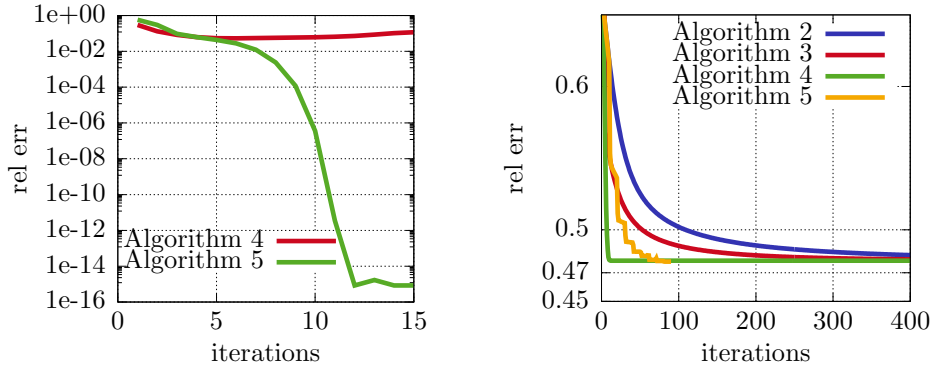

\begin{subfigure}[b]{0.48\textwidth}
\input{plot-counter.tex}
\caption{An example tensor (order $d=8$ with $12\,288$ entries) for which Algorithm~\ref{algorithm:powerIterationSquaring} does not
converge whereas Algorithm~\ref{algorithm:adaptive} (including only two steps of 
Algorithm~\ref{algorithm:powerIterationRay}) does.\\} 
\label{figure:noConvergenceSquaring}
\end{subfigure}\quad
\begin{subfigure}[b]{0.48\textwidth}
\input{plot-no-convergence.tex}
\caption{A counterexample $\mathbf a\in\mathbb R^I$ defined by \eqref{equation:counterexample3a}, \eqref{equation:counterexample3b}, for which none of our algorithms approximates well the maximum norm $\|\mathbf a\|_\infty$: the relative error of all methods stays above $0.47$.}
\label{figure:noConvergence}
\end{subfigure}
\caption{Counterexamples}
\end{figure}

In Figure~\ref{subfigure:histogramAD} we apply Algorithm~\ref{algorithm:adaptive} to
the same 1000 tensors as in Figure~\ref{subfigure:histogramSQ} 
(and in Figure~\ref{figure:histogramPIPR}): we now get convergence for $75\%$ 
of the cases where Algorithm~\ref{algorithm:powerIterationSquaring} did not converge.

\subsection{An example where the algorithms fail}\label{subsection:noConvergence}
Based on \eqref{equation:counterexample} we can construct tensors $\mathbf a\in\mathbb{R}^I$,
$I = \bigtimes_{\mu = 1}^d I_\mu$, for which our algorithms fail: the tensor entry
$\mathbf a[\mathbf i]$ which maximizes $|\mathbf a[\mathbf i]|$ is deleted by truncations which
results in the algorithms no longer converging to $\|\mathbf a\|_\infty$.
As an example take a tensor $\mathbf a\in\mathbb{R}^I$, $I = \bigtimes_{\mu = 1}^d I_\mu$, with
$d = 10$ and mode sizes $\# I_\mu = n$, $\mu = 1,\ldots ,d$, defined by
\begin{equation}\label{equation:counterexample3a}
\mathbf a := 
\left(
\begin{array}{r}
\mbox{\tt rand}[0.91,1.0]\\
\vdots\\
\mbox{\tt rand}[0.91,1.0]
\end{array}
\right)\otimes\cdots\otimes
\left(
\begin{array}{r}
\mbox{\tt rand}[0.91,1.0]\\
\vdots\\
\mbox{\tt rand}[0.91,1.0]
\end{array}
\right)
\end{equation}
at all indices $\mathbf i\neq (1,\ldots ,1)$ and 
\begin{equation}\label{equation:counterexample3b}
\mathbf a[1,\ldots ,1] := 1.9.
\end{equation}
The elementary tensor \eqref{equation:counterexample3a} has HT-rank $(1,\ldots ,1)$ and the 
extra definition \eqref{equation:counterexample3b} increases the components of the HT-rank to $2$.
The resulting tensor $\mathbf a$ contains entries in $[0.1,1.0]$ at all indices 
$\mathbf i\neq (1,\ldots ,1)$ and attains its maximum $1.9$ at $(1,\ldots ,1)$, i.e.
$\|\mathbf a\|_\infty = 1.9$.
As shown in Figure~\ref{figure:noConvergence}, none of the methods presented in this article 
approximates well the maximum norm $\|\mathbf a\|_\infty$: the relative errors stay above $0.47$
which corresponds to a computed maximum norm of less than $1.007$ instead of $1.9$.

\subsection{Finding the maximizing index by a binary search (Algorithm~\ref{algorithm:binarySearch})}\label{subsection:experimentsArgmax}
In Figure~\ref{figure:argmaxBinarySearch} the runtimes of 
Algorithms~\ref{algorithm:adaptive} and \ref{algorithm:binarySearch}
are displayed, once for varying tensor order $d$ and once for varying mode sizes $n$.
As expected, we find the runtime of Algorithm~\ref{algorithm:adaptive} to depend 
linearly on the tensor order $d$, whereas the runtime of 
Algorithm~\ref{algorithm:binarySearch} in the end grows quadratically with $d$, 
cf. \eqref{equation:complexitySearch}.
Also the dependence of the runtime on $n$ behaves as expected: finally both
algorithms seem to grow linearly with $n$.
\begin{figure}
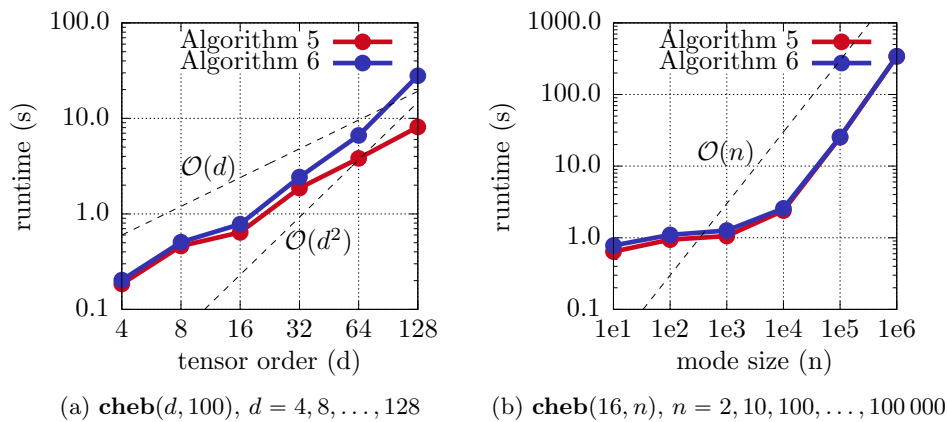

\begin{subfigure}[b]{0.48\textwidth}
\input{plot-argmax-d.tex}
\caption{$\cheb(d,100)$, $d = 4,8,\ldots ,128$}
\end{subfigure}
\begin{subfigure}[b]{0.48\textwidth}
\input{plot-argmax-n.tex}
\caption{$\cheb(16,n)$, $n = 2,10,100,\ldots ,100\,000$}
\end{subfigure}
\caption{Runtimes of Algorithms~\ref{algorithm:adaptive} and 
\ref{algorithm:binarySearch} for the tensor $\mathbf a = \cheb(d,n)$ with tensor 
orders $d = 4,8,\ldots,128$ and mode sizes $n = 10,100,\ldots ,1\,000\,000$. 
For all values of $d$ and $n$ the corresponding relative approximation error of 
$\|\mathbf a\|_\infty$ lies between $2\cdot 10^{-10}$ and $5\cdot 10^{-4}$.}
\label{figure:argmaxBinarySearch}
\end{figure}

\section{Conclusions}
In this article we have shown how extreme entries in low-rank tensors
can be approximated using a power iteration. The resulting algorithms can
be applied for any low-rank tensor format which meets the requirements
stated at the end of Section~\ref{section:tensorformats}.
We presented two approaches how to accelerate the generally slowly
converging power iteration. The algorithm which we suggest is an adaptive
combination of these accelerated methods.
A binary search makes it possible to also find an according extreme
entry of the tensor, i.e. find a tensor index which maximizes the
absolute value of tensor entries.
Our experiments in 
Section~\ref{section:experiments} indicate that our algorithms work
well for many tensors, cf. Figures~\ref{figure:histogramSQAD} and
\ref{figure:argmaxBinarySearch}.
Even for those tensors of Figure~\ref{figure:histogramSQAD} for which
the convergence of
Algorithms~\ref{algorithm:powerIterationSquaring} or 
\ref{algorithm:adaptive} stagnates early, the maximum relative error
is $4\cdot 10^{-3}$, which would still be of good use in many
applications.

The maximum norm estimator of our algorithm approximates the maximum
norm from below (i.e. yields a lower bound), cf.
Lemma~\ref{lemma:improvedEstimator}.
In order to use a \emph{branch and bound} algorithm it would be 
interesting to have useful upper bounds for the
maximum norm.

\bibliographystyle{siam}
\bibliography{refs}

\end{document}